\newtheorem{thm}{Theorem}[section]
\newtheorem{prop}[thm]{Proposition}
\newtheorem{lem}[thm]{Lemma}
\newtheorem{cor}[thm]{Corollary}
\newtheorem{conj}[thm]{Conjecture}
\newtheorem{defn}[thm]{Definition}
\newtheorem{qu}[thm]{Question}
\theoremstyle{remark}
\newtheorem{rem}[thm]{Remark}
\newtheorem{ex}[thm]{Example}
\title{On closed leaves of foliations, multisections and stable commutator lengths}
\author{Jonathan Bowden}
\address{Mathematisches Institut, Ludwig-Maximilians-Universit\"at, Theresienstr. 39, 80333 M\"unchen, Germany}
\curraddr{Mathematisches Institut, Universit\"at Augsburg, Universit\"atsstr. 14, 86159 Augsburg, Germany}
\email{jonathan.bowden@mathematik.uni-muenchen.de}
\date{\today}
\subjclass[2000]{Primary 57R30, 57R22; Secondary 57N13, 20F12, 20F69}
\begin{document}
\begin{abstract}
We give examples of foliations that answer two questions posed by Mitsumatsu and Vogt about the genus minimising properties of closed leaves of 2-dimensional foliations on 4-manifolds. By studying stable commutator lengths in certain stable mapping class groups, we also answer an asymptotic version of another question of theirs concerning bounds on self-intersection numbers of multisections in surface bundles.
\end{abstract}

\maketitle
\section{Introduction}
In \cite{MV} the authors study the problem of finding representatives for 2-dimensional homology classes that can be realised as leaves of foliations on a given 4-manifold. In dimension $4$ the existence of $2$-dimensional foliations reduces to the existence $2$-plane distributions, by an $h$-principle of Thurston. Moreover, the existence of an oriented $2$-dimensional distribution is equivalent to a splitting of the tangent bundle as the Whitney sum of two rank-2 subbundles. The Euler classes of these distributions must satisfy certain necessary conditions given by the Whitney sum formula. If an embedded surface $\Sigma$ can be realised as a leaf of a foliation on a $4$-manifold $M$, then the foliation defines a flat connection on the normal bundle $\nu_{\Sigma}$ of $\Sigma$ via the Bott construction. The Milnor inequality then implies that the Euler class of $\nu_{\Sigma}$ satisfies \[|e(\nu_{\Sigma})| \leq g(\Sigma) - 1.\] This inequality is then an obvious necessary condition for a given surface to be realisable as a leaf of a foliation. 

Using Thurston's $h$-principle Mitsumatsu and Vogt show in \cite{MV} that $\Sigma$ can be made a leaf of a foliation if and only if its normal bundle satisfies the Milnor inequality and there exist cohomology classes $e_1,e_2$ satisfying the necessary cohomological conditions mentioned above as well as the following equations: 
\[ e_1([\Sigma]) = 2 - 2g(\Sigma) \text{ and } e_2([\Sigma]) = e(\nu_{\Sigma}).[\Sigma] = [\Sigma]^2.\]
By using an alternate form of these cohomological criteria, we generate many examples of distributions where one has great flexibility in solving the additional equations needed to realise a given surface $\Sigma$ as a leaf. These examples then show that the genus of leaves representing a fixed homology class which can be realised as closed leaves is not unique and that the Euler class of a foliation does not determine whether the genera of leaves are minimal. This then yields negative answers to the two corresponding questions that were posed in \cite{MV} (cf.\ Examples \ref{foliated_genus} and \ref{leaf_minimise}).

Another rich source of foliations on 4-manifolds come as the horizontal foliations of flat surface bundles. Closed leaves of such foliations correspond to finite orbits under the holonomy given by the flat structure. Such leaves are examples of so-called $k$-multisections, where $k$ is the number of points in the intersection of the leaf with a fibre. In studying the self-intersection numbers of closed leaves of 2-dimensional foliations on 4-manifolds Mitsumatsu and Vogt posed the following question.
\begin{qu}[\cite{MV}, Problem 8.12]\label{Mit_Vogt_Prob}
For a given $h$ and $g$, does there exist an upper bound for the self-intersection number of any multisection of any $\Sigma_h$ bundle over $\Sigma_g$?
\end{qu}
An important initial observation for dealing with this question is the fact that the vertical Euler class of a surface bundle is bounded in the sense of Gromov (Proposition \ref{Euler_bounded}). We give two new proofs of this fact, which is originally due to Morita. The first of these relies on the adjunction inequality coming from Seiberg-Witten gauge theory and the second uses results of \cite{EnK} on the positivity of stable commutator lengths in the mapping class group. The boundedness of the vertical Euler class then implies that the self-intersection number of a section of a surface bundle is bounded in terms of the genus of the fibre and base. Similarly, the self-intersection number defines a characteristic class $e^v_k$ of $\Sigma_h$-bundles with $k$-multisections. This class is then also bounded and in this context one has a natural asymptotic analogue of Question \ref{Mit_Vogt_Prob} above. Namely, one would hope for a universal bound on the norms of the classes $e^v_k$.
\begin{qu}[Asymptotic version]\label{Mit_Vogt_Prob_asymptotic}
Are the norms $||e^v_k||_{\infty}$ bounded independently of $k$?
\end{qu}
\noindent This asymptotic version of the question of Mitsumatsu and Vogt is stronger than their original question and the main result of the second part of this paper is that it is false.
\begin{thm}
The sequence $||e^v_k||_{\infty}$ is unbounded.
\end{thm}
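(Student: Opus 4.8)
The plan is to translate $\|e^v_k\|_\infty$ into a stable commutator length and then to show that this stable commutator length can be made as small as $1/k$.

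First, the bookkeeping. A $\Sigma_h$-bundle over $\Sigma_g$ together with a connected $k$-multisection is the same as a homomorphism $\rho\colon\pi_1(\Sigma_g)\to\mathrm{Mod}_{h,k}$, the mapping class group of $\Sigma_h$ with $k$ (unordered) marked points, and the self-intersection number of the multisection is $\langle\rho^*e^v_k,[\Sigma_g]\rangle$; since closed surfaces realise all of $H_2$ and compute the $\ell^1$-seminorm in degree two, $\|e^v_k\|_\infty\ge\sup_\rho|\langle\rho^*e^v_k,[\Sigma_g]\rangle|/(4g-4)$. Stabilising the genus of the fibre, $e^v_k$ is a class on the stable mapping class group $\mathrm{Mod}_{\infty,k}$, and as an integral degree-two class it is the Euler class of a central extension $1\to\mathbb Z\to\widetilde{\mathrm{Mod}}_{\infty,k}\to\mathrm{Mod}_{\infty,k}\to 1$; write $z_k$ for the generator of the kernel (concretely a product of boundary twists, one around each marked point). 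Boundedness of $e^v_k$ — the $k$-multisection version of Proposition \ref{Euler_bounded}, proved by the same Endo--Kotschick quasimorphism argument — gives $\operatorname{scl}(z_k)>0$; more to the point, any expression of a power $z_k^{\,n}$ as a product of $m$ commutators in $\widetilde{\mathrm{Mod}}_{\infty,k}$ produces a homomorphism $\pi_1(\Sigma_m)\to\mathrm{Mod}_{\infty,k}$ with $|\langle e^v_k,[\Sigma_m]\rangle|=n$, whence $\|e^v_k\|_\infty\ge n/(4m-4)$. Letting $n\to\infty$ and choosing $m=m(n)$ realising $\operatorname{cl}(z_k^{\,n})$ yields $\|e^v_k\|_\infty\ge 1/(4\operatorname{scl}(z_k))$. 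So it suffices to prove $\operatorname{scl}(z_k)\le 1/k$.

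To do that, fix a free action of $\mathbb Z/k$ on the genus $(k+1)$ surface, so that the quotient map $q\colon\Sigma_{k+1}\to\Sigma_2$ is an unbranched $k$-fold covering. In the trivial bundle $\Sigma_{k+1}\times\Sigma_2\to\Sigma_2$ the graph $\Gamma_q$ of $q$ is an embedded surface, transverse to the fibres (because $q$ is a local diffeomorphism) and meeting each fibre in $k$ points; it is a connected $k$-multisection, and its self-intersection number equals $\langle e(T\Sigma_2),q_*[\Sigma_{k+1}]\rangle=k\,\chi(\Sigma_2)=-2k$ (equivalently $\chi(\Sigma_{k+1})=-2k$, via $\Gamma_q\cong\Sigma_{k+1}$). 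The associated monodromy is a homomorphism $\pi_1(\Sigma_2)\to\mathrm{Mod}_{\infty,k}$ with $\langle e^v_k,[\Sigma_2]\rangle=-2k$, so lifting the standard generators $a_1,b_1,a_2,b_2$ of $\pi_1(\Sigma_2)$ to $\widetilde{\mathrm{Mod}}_{\infty,k}$ gives a relation $[\tilde a_1,\tilde b_1][\tilde a_2,\tilde b_2]=z_k^{-2k}$. Thus $z_k^{2k}$ is a product of two commutators, $\operatorname{scl}(z_k)\le 2/(2k)=1/k$, and therefore $\|e^v_k\|_\infty\ge k/4\to\infty$.

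The main obstacle — where I expect the real work — is the identification used in the second paragraph: pinning down $z_k$ and verifying that the self-intersection number of a $k$-multisection is exactly the exponent of $z_k$ in the corresponding lifted relation, rather than that exponent distorted by a factor arising from the way the vertical Euler class distributes over the $k$ sheets of the multisection (i.e.\ getting the combinatorics of the transfer construction defining $e^v_k$ right). A secondary point is that the quasimorphism input for boundedness really applies to the infinite-genus stable mapping class group with $k$ marked points; this is handled just as for Proposition \ref{Euler_bounded}.
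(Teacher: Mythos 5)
There is a genuine gap, and it is exactly at the point where you ``stabilise the genus of the fibre''. The theorem concerns the classes $e^v_k \in H^2(\Gamma_{h,k})$ for a \emph{fixed} fibre genus $h\geq 2$, with $k\to\infty$ (this is what makes it an asymptotic version of Question \ref{Mit_Vogt_Prob}, where $h$ and $g$ are given). Your covering construction uses the graph of a $k$-fold covering $\Sigma_{k+1}\to\Sigma_2$ inside $\Sigma_{k+1}\times\Sigma_2$, so the fibre genus is $k+1$ and grows with $k$: what you obtain is a lower bound (in fact the equality $\|e^v_k\|_\infty = k/2$, matching Proposition \ref{Euler_bounded_k}) for the class on $\Gamma_{k+1,k}$, not on $\Gamma_{h,k}$. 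This is essentially Example \ref{sections_of_bundles} of the paper with $g=2$ and $h=k+1$, and the discussion after Corollary \ref{stable_sequence} points out precisely that such covering tricks cannot prove the theorem, since for them the self-intersection per base genus is pinned to $h-1$. Passing to the genus-stable group $\mathrm{Mod}_{\infty,k}$ does not repair this: the inclusion $\Gamma_{h,k}\hookrightarrow\mathrm{Mod}_{\infty,k}$ can only \emph{decrease} $\ell^\infty$-norms under restriction, so a lower bound for the stable class (or a relation $z_k^{2k}=$ product of two commutators in an extension of the stable group, whose entries involve mapping classes of arbitrarily large genus) gives no information about $\|e^v_k\|_\infty$ for the fixed $h$. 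Moreover your input ``boundedness of $e^v_k$ via the Endo--Kotschick argument gives $\operatorname{scl}(z_k)>0$'' is not available in the genus-stable setting: by Kotschick's theorem (used in Proposition \ref{comp_bound}) the stable mapping class group carries no nontrivial homogeneous quasi-homomorphisms, so stable commutator lengths vanish there and the Bavard-type translation you rely on collapses. (A secondary slip: the central generator of the $\mathbb{Z}$-extension defining $e^v_k$ is the image of a \emph{single} boundary twist, the product of all $k$ boundary twists being $z_k^{\,k}$; this affects your normalisations, but it is not the main problem.)

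The paper's proof stabilises in the opposite direction, keeping $h$ fixed and letting the number of marked points grow: Lemma \ref{bounded_stable} shows that a uniform bound on $\|e^v_k\|_\infty$ would force the limit class $e^v_\infty\in H^2(\Gamma^1_{h,\infty})$ to be bounded, and Theorem \ref{unbound} rules this out by showing, via Lemma \ref{bound_commutator}, that the relevant boundary Dehn twist has vanishing stable commutator length; the vanishing is extracted from lantern relations together with Proposition \ref{stable_Kotschick} (no nontrivial homogeneous quasi-homomorphisms on the stabilised braid group $\hat B_\infty$), which is the genuinely new input your proposal is missing. If you want to salvage your strategy, you would need, for the fixed $h$, multisections whose self-intersection grows faster than any fixed multiple of the base genus as $k\to\infty$; the point of the paper is that such ``exotic'' multisections are only produced indirectly, through the unboundedness of $e^v_\infty$, not by explicit covering constructions.
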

The proof of this fact exploits Bavard duality as well as results of \cite{Kot3} on the vanishing of the stable commutator lengths for certain stable groups. Moreover, this result fits with the observation that stable classes in stable groups tend to be unbounded and leads us to the conjecture that the bounded cohomology of the stable mapping class group is trivial.

\subsection*{Acknowledgments:}
The results of this article are taken from the author's doctoral thesis, which would not have been possible without the encouragement and support of Prof.\ D.\ Kotschick. The financial support of the Deutsche Forschungsgemeinschaft is also gratefully acknowledged.

\subsection*{Notation and Conventions:}
All bundles will be assumed to be oriented and all maps are smooth. Unless otherwise stated all homology groups will be taken with integral coefficients.

\section{Foliations with closed leaves on 4-manifolds}\label{distributions}
In general, a necessary condition for a manifold $M$ to admit a $k$-dimensional foliation is that it first admits a $k$-dimensional distribution. For 2-dimensional foliations of codimension greater than 1, this is however also sufficient.
\begin{thm}[\cite{Th1}, Cor.\ 3]\label{Th_h_principle}
Let $M$ be an oriented manifold of dimension greater than 3 and let $\xi$ be an oriented distribution of 2-planes. Then $\xi$ is homotopic to a foliation. Furthermore, if $\xi$ is integrable in a neighbourhood of a compact set $K \subset M$ then this homotopy may be taken relative to $K$.
\end{thm}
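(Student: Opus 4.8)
The plan is to reduce the statement to a homotopy‑theoretic lifting problem via Haefliger's classifying space $B\Gamma_q$ and then to invoke the integration theorem for foliations in positive codimension. First I would fix a metric and set $\nu := \xi^{\perp}$, so that $TM = \xi \oplus \nu$ with $q := \mathrm{rank}\,\nu = \dim M - 2 \ge 2$. Recalling that a codimension‑$q$ foliation on $M$ amounts, up to integrable homotopy, to a \emph{formal foliation} --- a Haefliger $\Gamma_q$‑structure $H$ together with a bundle epimorphism $TM \twoheadrightarrow \nu(H)$ onto its normal bundle, equivalently a rank‑$2$ subbundle $\tau \subset TM$ and an identification $TM/\tau \cong \nu(H)$ --- I would take $\tau := \xi$ and try to manufacture a $\Gamma_q$‑structure $H$ with $\nu(H) \cong \nu$. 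This is exactly the problem of lifting the classifying map $M \to BO_q$ of $\nu$ along the forgetful map $B\Gamma_q \to BO_q$.

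To solve the lifting problem I would appeal to the Mather--Thurston theorem, by which the homotopy fibre $\overline{B\Gamma_q}$ of $B\Gamma_q \to BO_q$ is $(q+1)$‑connected. Since $\dim M = q + 2$, the successive obstructions to the lift lie in $H^{i+1}\bigl(M;\pi_i(\overline{B\Gamma_q})\bigr)$, and a nonzero coefficient group requires $i \ge q + 2 = \dim M$, hence cohomological degree $i+1 > \dim M$; so all obstructions vanish and the lift exists. For the relative statement I would note that where $\xi$ is already integrable the ambient foliation supplies a $\Gamma_q$‑structure with the correct normal bundle, i.e.\ a partial lift over a neighbourhood of $K$, and the identical count with the relative groups $H^{i+1}(M,K;\pi_i(\overline{B\Gamma_q}))$ extends it over $M$ without changing it near $K$.

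With the formal foliation $(\xi, H)$ in hand, the last step is to deform it to a genuine foliation. Here I would quote the $h$‑principle for foliations: on the open manifold $M \setminus \{\mathrm{pt}\}$ the Gromov--Phillips--Haefliger theorem deforms $(\xi, H)$ to an honest foliation realising $\xi$ there, and Thurston's argument then fills the foliation in over a neighbourhood of the deleted point, using the Mather--Thurston machinery; the relative version of this $h$‑principle leaves the foliation fixed near $K$ (where it already coincides with the given one) and also furnishes the homotopy of plane fields from $\xi$ to the tangent distribution of the resulting foliation, constant near $K$.

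The hard part is unquestionably this final step: all of the obstruction‑theoretic work succeeds for the soft reason that codimension $\ge 2$ makes the connectivity of $\overline{B\Gamma_q}$ exceed $\dim M$, whereas the integration of a complemented $\Gamma_q$‑structure on a \emph{closed} manifold is the substantive content of Thurston's theorem and depends on the Mather--Thurston computation of the homology of $B\mathrm{Diff}^{\delta}$ together with the microflexibility of the sheaf of foliations needed for the relative and parametric refinements.
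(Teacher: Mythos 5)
The paper gives no proof of this statement --- it is quoted verbatim as Corollary~3 of Thurston's paper \cite{Th1} --- so there is no internal argument to compare yours against. Judged on its own terms, your outline correctly reproduces the standard architecture by which that corollary is obtained: reduce to lifting the classifying map of $\nu = TM/\xi$ along $B\Gamma_q \to BO_q$, kill the obstructions using the $(q+1)$-connectivity of the homotopy fibre $\overline{B\Gamma_q}$ (your degree count is right: with $q = \dim M - 2$ the first possibly nonzero coefficient group forces cohomological degree $q+3 = \dim M + 1$), and then integrate the resulting complemented Haefliger structure via the Haefliger--Gromov--Phillips $h$-principle on the complement of a point together with Thurston's filling argument at the remaining disc. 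Two caveats. First, the two inputs you invoke --- the $(q+1)$-connectivity of $\overline{B\Gamma_q}$ and the integration theorem for complemented $\Gamma_q$-structures on closed manifolds --- are precisely the main theorems of the paper being cited, so what you have written is a derivation of Corollary~3 from Thurston's Theorems~1--2 rather than an independent proof; that is in fact how Thurston deduces it, and you are candid that the substantive content (Mather--Thurston, perfectness of $\mathrm{Diff}_c(\mathbb{R}^q)_0$, microflexibility) is black-boxed. Second, for the relative clause you should say explicitly that it is the homotopy of \emph{plane fields}, not merely of Haefliger structures, that stays constant near $K$; the relative form of the integration theorem does deliver this, but as written your last paragraph only asserts that the foliation agrees with the given one near $K$, which is slightly weaker than the statement being proved.
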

\noindent Moreover, the following result of Hirzebruch and Hopf expresses the existence of oriented distributions in terms of the existence of certain cohomology classes.
\begin{prop}[\cite{HH}]\label{char_sufficient}
A closed, oriented 4-manifold $M$ admits an oriented 2-plane distribution $\xi$ if and only if there exists a pair of characteristic elements for the intersection form $K_{+}, K_{-} \in H^2(M)$ such that:
\begin{equation*} \langle K_{\pm}^2, [M] \rangle = \pm 2\chi(M) + 3\sigma(M). \end{equation*}
Moreover, the Euler classes $e_1 = e(\xi)$ and $e_2 = e(\xi^{\perp})$ are given by the following formulae:
\[2e_1 = K_+ + K_{-} \ \]
\[2e_2 = - K_+ + K_{-} \ .\]
\end{prop}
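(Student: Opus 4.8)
The plan is to reformulate the existence of an oriented $2$-plane distribution as a splitting problem for the tangent bundle and then to read off the characteristic elements from the Euler classes of the two rank-$2$ summands. An oriented $2$-plane distribution $\xi \subset TM$ is the same datum as a Whitney-sum decomposition $TM \cong E_1 \oplus E_2$ into oriented rank-$2$ bundles: fixing a Riemannian metric one takes $E_1 = \xi$, $E_2 = \xi^{\perp}$, and conversely a splitting exhibits $E_1$ as a sub-bundle of $TM$. Since $BSO(2)$ is a $K(\mathbb{Z},2)$, an oriented rank-$2$ bundle over $M$ is classified up to isomorphism by its Euler class in $H^2(M)$, and every class occurs. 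Hence it suffices to decide for which pairs $e_1, e_2 \in H^2(M)$ the Whitney sum $E_{e_1}\oplus E_{e_2}$ of rank-$2$ bundles with these Euler classes is isomorphic to $TM$.

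For this I would invoke the classification of oriented rank-$4$ vector bundles over a $4$-complex: by Dold--Whitney such a bundle is determined up to isomorphism by the triple $(w_2, p_1, e)$, the Euler class being a genuinely independent invariant in rank $4$. Using $w_1 = 0$, multiplicativity of the Euler class, and the identity $p_1 = e^2$ for oriented rank-$2$ bundles, one computes $w_2(E_{e_1}\oplus E_{e_2}) = \rho(e_1) + \rho(e_2)$, $p_1(E_{e_1}\oplus E_{e_2}) = e_1^2 + e_2^2$ and $e(E_{e_1}\oplus E_{e_2}) = e_1 \smile e_2$, where $\rho$ denotes reduction mod $2$ and no torsion ambiguity arises since $H^4(M)\cong\mathbb{Z}$. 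On the other side $w_2(TM)$ is the Wu class $v_2$, while $\langle p_1(TM),[M]\rangle = 3\sigma(M)$ by the signature theorem and $\langle e(TM),[M]\rangle = \chi(M)$ by Poincar\'e--Hopf. Hence $E_{e_1}\oplus E_{e_2}\cong TM$ precisely when $\rho(e_1)+\rho(e_2) = v_2$, $\langle e_1^2 + e_2^2,[M]\rangle = 3\sigma(M)$ and $\langle e_1 \smile e_2,[M]\rangle = \chi(M)$.

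The proposition now follows after the linear change of variables $2e_1 = K_+ + K_-$, $2e_2 = -K_+ + K_-$ (equivalently $K_+ = e_1 - e_2$, $K_- = e_1 + e_2$). The condition $\rho(e_1) + \rho(e_2) = v_2$ becomes $\rho(K_+) = \rho(K_-) = v_2$, which by Wu's formula says exactly that $K_+$ and $K_-$ are characteristic elements of the intersection form; and, using $K_+^2 + K_-^2 = 2(e_1^2 + e_2^2)$ together with $K_-^2 - K_+^2 = 4 e_1 e_2$, the remaining two conditions are equivalent to the numerical conditions $\langle K_{\pm}^2,[M]\rangle = \pm 2\chi(M) + 3\sigma(M)$ of the statement. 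This proves necessity. For sufficiency, given characteristic elements $K_{\pm}$ with $\langle K_{\pm}^2,[M]\rangle = \pm 2\chi(M) + 3\sigma(M)$, one has $K_+ \equiv K_- \pmod 2$, so $K_+ + K_- \in 2H^2(M)$; choose $e_1$ with $2e_1 = K_+ + K_-$ and put $e_2 := K_- - e_1$, so that $2e_2 = -K_+ + K_-$ and the equalities $e_1 - e_2 = K_+$, $e_1 + e_2 = K_-$ hold on the nose. The three bundle conditions above are then satisfied, whence $E_{e_1}\oplus E_{e_2}\cong TM$; transporting the sub-bundle $E_{e_1}$ through this isomorphism yields a distribution $\xi$ with $e(\xi) = e_1$ and $e(\xi^{\perp}) = e_2$, which is the asserted formula.

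The main obstacle is the bundle-classification input of the second paragraph. The rank-$4$ case of the Dold--Whitney theorem is subtler than higher ranks because the Euler class has to be carried along as an independent invariant, and one must treat $2$-torsion in $H^2(M)$ carefully---both when dividing $K_+ + K_-$ by $2$ and when checking that $\rho(e_1 + e_2)$ equals $\rho(K_-)$ on the nose rather than merely up to torsion, which is why $e_2$ should be defined as $K_- - e_1$ rather than as $(K_- - K_+)/2$. It remains to confirm that the Wu relations forced on $(w_2, p_1, e)$ for $TM$ are compatible with those of the constructed Whitney sum, but since both triples come from honest vector bundles this is automatic, so in fact only the uniqueness half of Dold--Whitney is needed.
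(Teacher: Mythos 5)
The paper itself offers no proof of this proposition --- it is quoted from Hirzebruch--Hopf, with a remark deferring the Euler-class formulas to the author's thesis --- so your Dold--Whitney reconstruction is the natural argument to supply, and its skeleton is sound: reduce to a splitting $TM\cong E_{e_1}\oplus E_{e_2}$, compute $(w_2,p_1,e)$ of the Whitney sum, and invoke only the uniqueness half of the classification of oriented rank-$4$ bundles over a $4$-complex. Two points, however, are genuine loose ends rather than routine checks. First, the bundle-theoretic condition you derive is $\rho(K_\pm)=w_2(M)$, i.e.\ that $K_\pm$ are \emph{integral lifts of} $w_2$. Wu's formula shows every such lift is characteristic for the intersection form, but the converse fails when $H^2(M;\mathbb{Z})$ has $2$-torsion: since $\ker\rho=2H^2(M;\mathbb{Z})$ by the Bockstein sequence, a characteristic element can differ from a lift of $w_2$ by a torsion class not divisible by $2$. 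Your sufficiency argument then breaks exactly where you assert $K_++K_-\in 2H^2(M;\mathbb{Z})$: for merely characteristic $K_\pm$ this divisibility can fail. The statement survives because $\langle K^2,[M]\rangle$ is unchanged when $K$ is modified by a torsion class, so one may first replace the given characteristic elements by lifts of $w_2$ with the same squares; but that reduction step needs to be said, and it is also where the residual torsion ambiguity in ``$2e_i=\dots$'' lives.

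Second, your sign bookkeeping does not actually close. With the orientation convention $e(\xi)\smile e(\xi^{\perp})=e(TM)$ that your computation of the Euler class of the Whitney sum uses, the substitution $K_+=e_1-e_2$, $K_-=e_1+e_2$ gives $\langle K_+^2,[M]\rangle=3\sigma(M)-2\chi(M)$ and $\langle K_-^2,[M]\rangle=3\sigma(M)+2\chi(M)$, i.e.\ the labels come out as $\mp2\chi+3\sigma$, opposite to the display in the proposition; equivalently, the stated formulas $2e_1=K_++K_-$, $2e_2=-K_++K_-$ together with $\langle K_\pm^2,[M]\rangle=\pm2\chi+3\sigma$ force $\langle e_1\smile e_2,[M]\rangle=-\chi(M)$. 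You assert that your three bundle conditions ``are equivalent to'' the displayed numerical conditions without verifying this, and it is not: either the orientation of $\xi^{\perp}$ is being fixed by the opposite convention (in which case your computation of $e(E_{e_1}\oplus E_{e_2})$ must be adjusted accordingly), or the two $K$'s must be relabelled. For the existence biconditional this is harmless, since the pair $\{K_+,K_-\}$ can be permuted; but the Euler-class formulas are precisely the content the paper's remark says requires an argument, so the convention has to be pinned down rather than elided.
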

\begin{rem}
Whilst the formula for the Euler classes of the distributions in Proposition \ref{char_sufficient} is not explicitly given in \cite{HH}, it follows readily from their arguments and we refer to (\cite{Bow2}, Section 2.2) for details.
\end{rem}
Using Proposition \ref{char_sufficient} one may give necessary and sufficient conditions for the existence of distributions in terms the Euler characteristic and signature of $M$. In particular, the existence of a smooth foliation only depends on the homotopy type of $M$. The following result goes back to Atiyah and Saeki (cf.\ \cite{Mats}, Theorem 2).
\begin{prop}[Existence of distributions]\label{existence_distributions}
Let $M$ be an oriented 4-manifold with indefinite intersection form, then $M$ admits a distribution if and only if
\[ \sigma(M) \equiv 0 \text{ mod 2} \emph{ and } \chi(M) \equiv \sigma(M) \text{ mod 4}. \] 
\end{prop}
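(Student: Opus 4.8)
The plan is to derive the statement from the Hirzebruch--Hopf criterion of Proposition~\ref{char_sufficient}, which reduces the existence of an oriented $2$-plane distribution on $M$ to a purely arithmetic question about the intersection form $Q_M$: namely, whether $Q_M$ admits characteristic elements $K_+$ and $K_-$ with $\langle K_+^2,[M]\rangle = 2\chi(M)+3\sigma(M)$ and $\langle K_-^2,[M]\rangle = -2\chi(M)+3\sigma(M)$. Write $\chi=\chi(M)$, $\sigma=\sigma(M)$ and $Q=Q_M$. Since for a fixed indefinite form the realisability of an integer as the self-intersection of a characteristic vector will turn out to depend only on its residue modulo $8$, the two existence requirements collapse to two congruences, and the content of the proof is to (a) identify those congruences with the stated conditions and (b) establish the relevant realisation fact.

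For necessity I would invoke van der Blij's lemma: any characteristic vector $c$ of a unimodular form $Q$ satisfies $c\cdot c\equiv\sigma(Q)\pmod 8$. Applied to $K_\pm$ this forces $2\chi+3\sigma\equiv\sigma$ and $-2\chi+3\sigma\equiv\sigma\pmod 8$, i.e.\ $\chi+\sigma\equiv 0$ and $\chi-\sigma\equiv 0\pmod 4$. Taking the sum and difference of these gives $2\chi\equiv 0$ and $2\sigma\equiv 0\pmod 4$; conversely one checks that $\sigma\equiv 0\pmod 2$ together with $\chi\equiv\sigma\pmod 4$ recovers both congruences. Thus the stated conditions are necessary for \emph{every} closed oriented $4$-manifold, indefinite or not.

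For sufficiency I would use Milnor's classification of indefinite unimodular forms: $Q$ is isomorphic either to the odd form $\langle 1\rangle^{\oplus p}\oplus\langle -1\rangle^{\oplus q}$ with $p,q\ge 1$, or to an even form $\varepsilon E_8^{\oplus m}\oplus H^{\oplus n}$ with $n\ge 1$ and $\varepsilon=\pm 1$. The key claim is that in either case $\{\,c\cdot c:\ c\ \text{characteristic}\,\}=\{\,t\in\mathbb Z:\ t\equiv\sigma(Q)\pmod 8\,\}$. In the odd case $(1,\dots,1)$ is characteristic with square $p-q=\sigma$, and changing one coordinate from $1$ to $3$ in a positive (resp.\ negative) summand alters the square by $+8$ (resp.\ $-8$); both moves are available since $p,q\ge 1$, so every admissible $t$ is attained. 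In the even case the characteristic vectors are exactly $2L$, the element $0$ is characteristic with square $0\equiv\sigma\pmod 8$, and inside a summand $H=\langle e,f\rangle$ the vector $2(ke+f)$ has square $8k$, so again every multiple of $8$ is attained; alternatively one may simply quote this realisation statement from Milnor--Husemoller. Granting the claim, characteristic $K_\pm$ with the required squares exist precisely when $\pm 2\chi+3\sigma\equiv\sigma\pmod 8$, i.e., by the computation above, precisely when $\sigma\equiv 0\pmod 2$ and $\chi\equiv\sigma\pmod 4$; combined with Proposition~\ref{char_sufficient} this proves the proposition.

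The only genuine work is the realisation claim, and this is exactly where indefiniteness is essential: Milnor's classification lets one move the square of a characteristic vector both up and down by $8$, whereas for a definite form (e.g.\ $E_8$ or $\langle 1\rangle^{\oplus n}$) only one direction, or a bounded range, is available. I expect the main bookkeeping hazard to be keeping the even and odd cases, and the sign conventions for $K_+$ versus $K_-$, straight; the classification of indefinite forms and van der Blij's lemma I would cite rather than reprove.
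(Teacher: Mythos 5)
The paper itself offers no proof of this proposition: it is quoted with a pointer to Atiyah and Saeki via \cite{Mats}, Theorem 2, so there is nothing in the text to compare your argument with line by line. Your route --- reduce to the arithmetic of the intersection form via Proposition \ref{char_sufficient}, use van der Blij's lemma for necessity, and the classification of indefinite unimodular forms plus a realisation statement for characteristic vectors for sufficiency --- is the standard derivation and is essentially correct. In particular the translation of $\pm 2\chi+3\sigma\equiv\sigma\pmod 8$ into $\sigma\equiv 0\pmod 2$ and $\chi\equiv\sigma\pmod 4$ is right, and your treatment of the even case (characteristic vectors are $2L$, and $2(ke+f)$ in a hyperbolic summand has square $8k$) is complete.

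The one step stated too loosely is the realisation claim in the odd case. The moves ``replace a coordinate $1$ by $3$ in a positive (resp.\ negative) summand'' shift the square by $+8$ (resp.\ $-8$), but each can be applied only once per coordinate (the next change $3\mapsto 5$ shifts by $16$), so for small $p,q$ they do not reach every target: e.g.\ for $Q\cong\langle 1\rangle\oplus\langle -1\rangle$, which occurs for manifolds with large $b_1$ and hence arbitrarily large $|{\pm}2\chi+3\sigma|$, you cannot get arbitrarily far from $\sigma$ this way. The fix is immediate and uses a positive and a negative coordinate simultaneously: take the characteristic vector with entries $2k+1$ and $2k-1$ in a fixed $\langle 1\rangle\oplus\langle -1\rangle$ pair and $\pm 1$ elsewhere; its square is $\sigma+8k$ for arbitrary $k\in\mathbb Z$, since $(2k+1)^2-(2k-1)^2=8k$. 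This is exactly the shape of the solutions $K_\pm$ written down in Example \ref{foliated_genus} of the paper. Alternatively cite a reference that really states the realisation of every residue $\equiv\sigma\pmod 8$ by characteristic vectors of an indefinite unimodular form (Wall's results on indefinite forms, or the discussion in \cite{GS}); I would not lean on Milnor--Husemoller without checking that the statement appears there in the form you need. With that repair your proof is complete, and indefiniteness enters only where you say it does, namely in the sufficiency direction.
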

In \cite{MV} the authors give criteria for the existence of 2-dimensional foliations with closed leaves on a 4-manifold $M$. The basic observation is that the Milnor inequality (cf.\ \cite{Mil}) puts homological restrictions on which classes $[\Sigma] \in H_2(M)$ can occur as leaves of a foliation. To begin with we note that if $\Sigma$ is a leaf of some foliation $\mathcal{F}$ on $M$, then there is a connection on the normal bundle $\nu(\mathcal{F})$ that is flat when restricted to leaves of the foliation. This is a so-called Bott connection (see \cite{Bott}). Thus, in particular, if $\Sigma$ is a leaf of some foliation then the normal bundle of $\Sigma$ is a flat bundle and the Milnor inequality implies that
\[ |[\Sigma]^2| = |e(\nu(\mathcal{F})).[\Sigma]| \leq g(\Sigma) -1.\]
Conversely, there are sufficient conditions for a given embedded surface to be a leaf of some foliation on a 4-manifold $M$. These are given in the following theorem of Mitsumatsu and Vogt.
\begin{thm}[\cite{MV}, Th.\ 4.4]\label{Mits_Vogt}
If a compact surface $\Sigma$ in a 4-manifold $M$ satisfies the Milnor inequality, then $\Sigma$ can be realised as a leaf of a foliation $\mathcal{F}$ if and only if there is a distribution $\xi$ such that the Euler classes $e_1 = e(\xi)$ and $e_2 = e(\xi^{\perp})$ satisfy
\begin{equation}\label{leaf_eq}
\langle e_1,[\Sigma] \rangle = \chi(\Sigma), \ \ \ \langle e_2,[\Sigma] \rangle =  [\Sigma]^2.
\end{equation}
\end{thm}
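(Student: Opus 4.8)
The plan is to prove the two implications separately. The direction ``$\Sigma$ a leaf $\Rightarrow$ the existence of $\xi$ satisfying \eqref{leaf_eq}'' is formal: if $\Sigma$ is a leaf of $\mathcal{F}$, take $\xi:=T\mathcal{F}$, an oriented $2$-plane distribution on $M$; along $\Sigma$ one has $\xi|_\Sigma=T\Sigma$, so the complementary distribution restricts to the normal bundle, $\xi^{\perp}|_\Sigma\cong\nu_\Sigma$ with the orientation induced by that of $M$, and evaluating Euler classes on $[\Sigma]$ gives $\langle e_1,[\Sigma]\rangle=\langle e(T\Sigma),[\Sigma]\rangle=\chi(\Sigma)$ and $\langle e_2,[\Sigma]\rangle=\langle e(\nu_\Sigma),[\Sigma]\rangle=[\Sigma]^2$, after orienting $\mathcal{F}$ compatibly with $[\Sigma]$.

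For the converse, assume $|[\Sigma]^2|\le g(\Sigma)-1$ and that a distribution $\xi$ with $\langle e_1,[\Sigma]\rangle=\chi(\Sigma)$ and $\langle e_2,[\Sigma]\rangle=[\Sigma]^2$ is given. The first step is to build a local model near $\Sigma$. Since $|e(\nu_\Sigma).[\Sigma]|=|[\Sigma]^2|\le g(\Sigma)-1$, Milnor's realisation theorem for flat plane bundles endows $\nu_\Sigma$ with a flat $GL^{+}(2,\mathbb{R})$-structure (the case $g(\Sigma)=0$ being vacuous, and $g(\Sigma)=1$ forcing $\nu_\Sigma$ trivial). The associated horizontal foliation of a tubular neighbourhood $N=N(\Sigma)$, identified with a disc sub-bundle of $\nu_\Sigma$, has the zero section $\Sigma$ as a closed leaf; write $\xi_0$ for its integrable tangent distribution on $N$, so that $\xi_0|_\Sigma=T\Sigma$.

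The heart of the argument is then to homotope $\xi$, through $2$-plane distributions on $M$, so that it coincides with $\xi_0$ on a neighbourhood of $\Sigma$, after which Theorem~\ref{Th_h_principle} finishes the job. Over the surface $\Sigma$ an oriented $2$-plane sub-bundle of $TM|_\Sigma$ is a section of a bundle with fibre the oriented Grassmannian $\widetilde{Gr}_2(\mathbb{R}^4)\cong S^2\times S^2$; since this fibre is $1$-connected, obstruction theory over the $2$-complex $\Sigma$ shows that two such sections are homotopic if and only if they induce the same Euler number on the sub-bundle \emph{and} the same Euler number on its complement, these two integers forming a complete invariant corresponding to $H^2(\Sigma;\pi_2(S^2\times S^2))\cong\mathbb{Z}^2$. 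By \eqref{leaf_eq}, $\xi|_\Sigma$ and the model sub-bundle $T\Sigma\subset TM|_\Sigma$ both yield the pair $(\chi(\Sigma),[\Sigma]^2)$ and so are homotopic over $\Sigma$; since $N$ deformation retracts onto $\Sigma$, it follows that $\xi|_N$ is homotopic to $\xi_0$. Performing this homotopy inside $N$ and leaving $\xi$ unchanged outside (interpolating along a collar), we may assume $\xi=\xi_0$ on a closed tubular neighbourhood $K\subset N$ of $\Sigma$; in particular $\xi$ is already integrable near $K$ with $\Sigma$ a leaf of the local foliation. Applying Theorem~\ref{Th_h_principle} to $\xi$ with this compact set $K$ produces a foliation $\mathcal{F}$ of $M$, homotopic to $\xi$ relative to $K$. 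On $K$ the foliation $\mathcal{F}$ is the flat model, which has $\Sigma$ as a leaf, and since a connected compact surface tangent to $\mathcal{F}$ is necessarily a single leaf, $\Sigma$ is a leaf of $\mathcal{F}$.

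The delicate point is the homotopy classification just used: one must verify that the pair $(\langle e_1,[\Sigma]\rangle,\langle e_2,[\Sigma]\rangle)$ is \emph{exactly} the complete set of homotopy invariants of an oriented $2$-plane sub-bundle of $TM|_\Sigma$ --- so that the two equations in \eqref{leaf_eq} are precisely what is needed --- which involves the structure of $\widetilde{Gr}_2(\mathbb{R}^4)\cong S^2\times S^2$ and some care with the twisting of the two associated $S^2$-bundles over $\Sigma$; one must also check that the homotopy dragging $\xi$ onto the flat model can be carried out so that integrability with $\Sigma$ as a leaf is preserved when Thurston's theorem is invoked, and cite the correct form of Milnor's realisation theorem together with the low-genus special cases.
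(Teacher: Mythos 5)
This theorem is not proved in the paper at all: it is quoted verbatim as Theorem 4.4 of Mitsumatsu--Vogt \cite{MV}, so there is no in-paper argument to compare yours against. That said, your reconstruction follows what is essentially the intended strategy of \cite{MV}: the forward implication via $\xi = T\mathcal{F}$ is indeed formal; the converse is built from the flat local model supplied by Milnor's theorem (the Milnor inequality being exactly the flatness criterion for $\nu_\Sigma$, with $g=0$ excluded and $g=1$ forcing triviality), a homotopy of $\xi$ onto that model near $\Sigma$, and Thurston's relative $h$-principle (Theorem \ref{Th_h_principle}) to integrate the result away from $\Sigma$. The ``delicate point'' you flag does go through: the difference obstruction for two sections of the $\widetilde{Gr}_2(\mathbb{R}^4)$-bundle over the surface $\Sigma$ lies in $H^2(\Sigma;\pi_2(S^2\times S^2))\cong\mathbb{Z}^2$, and under the Hirzebruch--Hopf identification it is carried to the change in $\bigl(\langle K_+,[\Sigma]\rangle,\langle K_-,[\Sigma]\rangle\bigr)$; since $K_{\pm}=e_1\mp e_2$ and $H^2(\Sigma;\mathbb{Z})$ is torsion-free, the pair $\bigl(\langle e_1,[\Sigma]\rangle,\langle e_2,[\Sigma]\rangle\bigr)$ determines this and hence is a complete invariant, which is exactly what \eqref{leaf_eq} requires. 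Two small points to tidy: Theorem \ref{Th_h_principle} needs $\xi$ integrable on an open neighbourhood of the compact set, so arrange $\xi=\xi_0$ on a slightly larger closed tubular neighbourhood and take $K$ in its interior; and the ``horizontal foliation of a disc sub-bundle'' should be read as the restriction to $N$ of the suspension foliation on the total space of $\nu_\Sigma$ (the holonomy in $GL^+(2,\mathbb{R})$ does not preserve a disc bundle, but restricting a foliation to an open set is harmless and $\Sigma$ remains a closed leaf). With those adjustments the argument is sound.
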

Mitsumatsu and Vogt posed several questions about the properties of surfaces that are leaves of foliations. In particular, they asked whether a class $\sigma \in H_2(M)$ knows its foliated genus (\cite{MV}, Question 8.8), that is if $\Sigma_1, \Sigma_2$ are leaves of foliations $\mathcal{F}_1, \mathcal{F}_2$  and $[\Sigma_1] = [\Sigma_2]$ in $H_2(M)$, does it then follow that $\chi(\Sigma_1) = \chi(\Sigma_2)$. The following examples provide a negative answer to this question.
\begin{ex}[The genus of leaves representing a fixed homology class]\label{foliated_genus}
Suppose we have a manifold $M$ whose signature and Euler characteristic satisfy the congruences of Proposition \ref{existence_distributions} so that $M$ admits distributions. Assume further that the intersection form on $M$ is odd and is of the form
\[Q \cong 2 \langle 1 \rangle \oplus 2 \langle -1 \rangle.\]
For example one can take 
\[M = 2 \mathbb{C}P^2 \# \thinspace 2 \overline{\mathbb{C}P}^2  \#  \thinspace  S^1 \times S^3.\]
We choose a basis $v_1,...v_4$ of $H^2(M)$ so that the intersection form $Q$ has the stipulated from. One then has an explicit family of solutions of the equations in Proposition \ref{char_sufficient} given by setting
\[ K_{+} = (2t - s, 2y + 1,  2y - 1, 1)\]
\[ K_{-} = (2t + s,  2z + 1,  2z - 1, 1).\]
Here $s$ and $t$ are free integral parameters, $s$ is odd and $y,z$ are chosen so that the equations in Proposition \ref{char_sufficient} are satisfied. Then the corresponding Euler classes are
\begin{align*}
e_1 & = (2t, y +z + 1, y+z - 1,  1)\\
e_2 & = (s, z - y , z -  y,  0).
\end{align*}
We let $[\Sigma] $ be a surface of minimal genus representing the class $v_1$ so that $[\Sigma]^2 = 1$ and set $s = 1$ so that the second equation in Theorem \ref{Mits_Vogt} above is satisfied. Then as $t$ was a free parameter in the set of solutions, it may be chosen so that the first equation in Theorem \ref{Mits_Vogt} is also satisfied. Furthermore, we may glue in as many trivial handles as we like to obtain a surface $\widetilde{\Sigma}$ that is homologous to $\Sigma$ and has arbitrarily large genus. In particular, we can assume that the Milnor inequality is satisfied and we see that $v_1$ has representatives of infinitely many genera that can be made a leaf of a foliation. It is easy to see that this example generalises to manifolds $M$ with odd intersection form and $b_2^{\pm}(M) \geq 2$, one simply splits the intersection form
\[ Q \cong 2\langle 1 \rangle \oplus 2\langle -1 \rangle \oplus \overline{Q}\]
and augments the solutions for $(K_{+}, K_{-})$ given above by putting odd integers in the remaining entries.
In this more general case we may even assume that $M$ is simply connected by setting
\[M = k\mathbb{C}P^2 \# l\overline{\mathbb{C}P}^2,\]
where $k$ and $l$ are both odd and larger than $3$.
\end{ex}
Another question posed by Mitsumatsu and Vogt is whether the Euler class of a distribution determines whether or not every leaf of a foliation with this Euler class will be genus minimising or not. This is false as the following example shows, providing a negative answer to Question 8.7 of \cite{MV}.
\begin{ex}[The Euler class does not determine whether the genera of leaves are minimal]\label{leaf_minimise}
We let $M = T^2 \times \Sigma_g$ and endow it with a product symplectic structure so that the two factors are symplectic submanifolds. Then the homology class $\sigma = [T^2 \times pt] + [pt \times \Sigma_g] = T + S $ can be represented by a symplectic surface $\Sigma$ of genus $g + 1$ that one obtains by resolving the intersection point of $(T^2 \times pt) \bigcup (pt \times \Sigma_g)$. Moreover, by the symplectic Thom conjecture this surface is genus minimising (cf.\ \cite{OS}) and the Milnor equality is satisfied if $g \geq 3$. 

The intersection form $M$ is isomorphic to $(2g + 1) H$ and we may take a hyperbolic basis $\{e_i\}$ for $H^2(M)$ with $e_1 = T$, $e_2 = S$. Since $\chi(M) = \sigma(M) = 0$ we have the following solutions of the equations in Proposition \ref {char_sufficient}:
\begin{align*}
 K_{+} &= (-2 - 2g, 0,  -2 - 2g',0,..., 0)\\
K_{-} &= (2 - 2g, 0,  2 - 2g',0,..., 0),
\end{align*}
and the corresponding Euler classes are
\begin{align*}
e_1 & = (-2g, 0, -2g',  0,..,0)\\
e_2 & = (2, 0, 2,0,...,0).
\end{align*}
Thus if we let $\Sigma'$ be a surface of genus $g' +1$ representing the class $\sigma' = (0,0,1,1,0,..,0)$ with $g'$ large, then the Milnor inequality and equations (\ref{leaf_eq}) hold for both $\Sigma$ and $\Sigma'$. However, $\Sigma$ is genus minimising and $\Sigma'$ is not. Hence the Euler class of a foliation does not determine whether closed leaves of a foliation with the given Euler class is genus minimising or not.
\end{ex}

\section{Surface bundles and boundedness of the vertical Euler class}\label{surface_bundles}
We begin by recalling some generalities about surface bundles and their sections, which for the most part can be found in \cite{Mor}. Let $\Gamma_h = Diff^+(\Sigma_h)/ Diff_0(\Sigma_h)$ denote the mapping class group of an oriented Riemann surface $\Sigma_h$ of genus $h$. By the classical result of Earle and Eells the identity component $Diff_0(\Sigma_h)$ is contractible in the $C^{\infty}$-topology if $ h \geq 2$ (cf.\ \cite{EE}). Thus the classifying space $B Diff^+(\Sigma_h)$ is homotopy equivalent to $B \Gamma_h$, which is in turn the Eilenberg-MacLane space $K(\Gamma_h,1)$.

In general, any bundle is determined up to bundle isomorphism by the homotopy class of its classifying map and since $B Diff^+(\Sigma_h)$ is aspherical, a surface bundle $\Sigma_h \to E \to B$ is determined up to bundle isomorphism by the conjugacy class of its \emph{holonomy representation}:
\[ \rho: \pi_1(B) \to \Gamma_h.\]
The conjugation ambiguity is a result of the choice of base points.
Conversely, any homomorphism $\rho: \pi_1(B) \to \Gamma_h$ induces a map $B \to K(\Gamma_h,1) = B \Gamma_h$ and thus defines a bundle that has holonomy $\rho$.

A section of a bundle $E$ is equivalent to a lift of the holonomy map of the bundle to $\Gamma_{h, 1}$, which denotes the mapping class group of $\Sigma_h$ with one marked point. That is $E$ admits a section if and only if there is a lift $\bar{\rho}$ so that the following diagram commutes
\[\xymatrix{ & \Gamma_{h,1} \ar[d] \\
\pi_1(B) \ar[r]^{\rho} \ar@{-->}[ur]^{\bar{\rho}} & \Gamma_h.} \]
Similarly, a $k$-multisection is the same as a lift of the holonomy map to the mapping class group with $k$ marked points that we denote by $\Gamma_{h, k}$. Here we require only that the set of $k$ marked points be fixed \emph{as a set}, rather than that each marked point itself be fixed by elements of $\Gamma_{h, k}$.


As in the case of ordinary surface bundles, the classifying space of surface bundles with a section is the Eilenberg-MacLane space $B \Gamma_{h,1} = K(\Gamma_{h,1}, 1)$, if $h \geq 2$. Furthermore, there is a natural exact sequence given by forgetting the marked point
\[1 \to \pi_1(\Sigma_h) \to \Gamma_{h,1} \to \Gamma_h \to 1\]
and one may identify $B \Gamma_{h,1}$ with the the total space $E \Gamma_h$ of the universal bundle over $B \Gamma_h$. Similarly, the classifying space for bundles with a $k$-multisection is the Eilenberg-MacLane space $B \Gamma_{h,k} = K(\Gamma_{h,k}, 1)$.

The bundle of vectors that are tangent to the fibres of the projection $E \Gamma_h \to B \Gamma_h$ is an oriented rank-2 vector bundle. The Euler class of this bundle defines a cohomology class $e \in H^2(E \Gamma_h) = H^2(B \Gamma_{h,1})$, which we will call the \emph{vertical Euler class}. Alternately, one can define $e$ as the Euler class associated to the central extension
\[1 \to \mathbb{Z} \to \Gamma^1_h \to \Gamma_{h,1} \to 1,\]
where $\Gamma^1_h = Diff^c(\Sigma^1_h)/ Diff^c_0(\Sigma^1_h)$ denotes the compactly supported mapping class group of a once punctured, genus $h$ surface. Here the right most map is given by collapsing the boundary to a point and the kernel is generated by a Dehn twist along a curve parallel to the boundary. Under the identification of $E \Gamma_h$ with the classifying space $B\Gamma_{h,1}$ of bundles with a section $S$, the class $e$ corresponds to the characteristic class given by restricting the vertical Euler class to $S$. Furthermore, if the base of the bundle is a surface, this is just the self-intersection number of the section.

By considering the diagonal section $\Delta_h \subset \Sigma_h \times \Sigma_h$ one sees that the class $e \in H^2(E \Gamma_h) =  H^2(\Gamma_{h,1})$ is non-trivial for $h \geq 2$. This example can be somewhat generalised by considering certain coverings (compare \cite{MV}, Example 8.13).
\begin{ex}[Multisections in $\Sigma_h \times \Sigma_g$]\label{sections_of_bundles}
Let $g,h \geq 2$ and let $\Sigma = \Sigma_2$. We then choose a surjective homomorphism $\pi_1(\Sigma) \to \mathbb{Z}_{g-1} \times \mathbb{Z}_{h-1}$. Denote by $\widetilde{\Sigma}$ the covering space associated to this homomorphism and let $G_1 = \mathbb{Z}_{g-1} \times 0$ and $G_2 = 0 \times \mathbb{Z}_{h-1}$ be the two factors of $G = \mathbb{Z}_{g-1} \times \mathbb{Z}_{h-1}$. By construction $G_1$ and $G_2$ act freely on $\widetilde{\Sigma}$ and we thus obtain coverings $\widetilde{\Sigma} \stackrel{\pi_i} \rightarrow \widetilde{\Sigma} / G_i$. We define a map
\[\widetilde{\Sigma} \stackrel{\pi_1 \times \pi_2} \rightarrow (\widetilde{\Sigma} / G_1) \times (\widetilde{\Sigma} / G_2) = \Sigma_h \times \Sigma_g.\]
One checks that this map is transverse to the fibres $\Sigma_h$, so that its image defines a multisection $S$ of $E$ and that
\[ |[S]^2| = |(2-2h).(g-1)| = |\chi(S)|.\]
\end{ex}
\noindent The diagonal section $\Delta_h$ has the property that $|[\Delta_h]^2| = 2h-2$. For fixed genus this self-intersection number is in fact maximal as the following proposition shows.
\begin{prop}\label{section_bound}
Let $S$ be a section of a surface bundle $\Sigma_h \to E \to \Sigma_g$ with $h \geq 2$, then $|[S]^2| \leq \text{max} \thinspace \{0,2g-2\}$.
\end{prop}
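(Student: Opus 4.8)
The plan is to realise $[S]^2$ as the self-intersection number of the embedded surface $S\cong\Sigma_g$ inside the $4$-manifold $E$, to observe that $E$ is symplectic, and then to apply the adjunction inequality coming from Seiberg--Witten theory.

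The case $g=0$ is degenerate and is handled directly: since $\mathrm{Diff}_0(\Sigma_h)$ is contractible (Earle--Eells), a $\Sigma_h$-bundle over $S^2$ is classified by a nullhomotopic map $S^2\to B\Gamma_{h,1}$, so $E\cong S^2\times\Sigma_h$, and the section, being homologous to $S^2\times\{pt\}$ because $\pi_2(\Sigma_h)=0$, has $[S]^2=0=\max\{0,-2\}$. So assume $g\ge 1$. Since $[S]\cdot[F]=1$ for the fibre class $[F]$, the fibre is homologically essential, and the fibration construction of Thurston equips $E$ with a symplectic form $\omega$ for which the fibres are symplectic; writing $\omega=\pi^*\omega_0+\varepsilon\eta$ with $\omega_0$ an area form on $\Sigma_g$ and $\varepsilon$ small, we may in addition assume $\langle[\omega],[S]\rangle>0$, as $\pi|_S$ is a diffeomorphism. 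If $[S]^2\ge 0$, then $S$ is an embedded surface of genus $g\ge 1$ with non-negative self-intersection pairing positively with $[\omega]$, and the adjunction inequality applied to the Seiberg--Witten basic class $K_\omega$ (a basic class by Taubes) gives $[S]^2\le[S]^2+|\langle K_\omega,[S]\rangle|\le 2g-2$. If $[S]^2<0$, reverse the orientation of the fibres of $E$: the resulting manifold is again a genus-$h$ surface bundle over $\Sigma_g$ with $S$ as a section, hence symplectic, and the self-intersection of the section has become $-[S]^2>0$, so the previous case yields $-[S]^2\le 2g-2$. In all cases $|[S]^2|\le\max\{0,2g-2\}$.

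The step needing care is the gauge-theoretic input. When $b_2^+(E)>1$ the adjunction inequality used above is the standard one. But $b_2^+(E)$ may equal $1$ — for instance for certain bundles whose monodromy leaves no nonzero element of $H^1(\Sigma_h;\mathbb{R})$ invariant, so that $H_2(E;\mathbb{R})$ is spanned by the fibre and the section — and in that regime one must invoke the form of the adjunction inequality for symplectic $4$-manifolds with $b_2^+=1$, which applies to surfaces pairing positively with $[\omega]$ because the small-perturbation Seiberg--Witten invariant of $K_\omega$ is nonzero in the chamber of $[\omega]$; this is precisely why the condition $\langle[\omega],[S]\rangle>0$ was arranged. (Alternatively, the bound can be obtained from the boundedness of the vertical Euler class together with Gromov's equality $\|\Sigma_g\|=\max\{0,4g-4\}$, which is the route natural when that boundedness is proved via stable commutator length.)
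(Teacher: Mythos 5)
Your overall strategy (Thurston form plus the Seiberg--Witten adjunction inequality, with an orientation flip to reduce to $[S]^2\ge 0$) is essentially the paper's, and your direct treatment of $g=0$ via triviality of the bundle over $S^2$ is fine. The gap is in the gauge-theoretic step you flag yourself, the case $b_2^+(E)=1$. The inequality you invoke there, $[S]^2\le[S]^2+|\langle K_\omega,[S]\rangle|\le 2g-2$, is the two-sided (absolute-value) adjunction inequality, and that is not a theorem for symplectic $4$-manifolds with $b_2^+=1$: smooth plane curves of degree $d\ge 4$ in $\mathbb{CP}^2$ have positive square, positive pairing with $\omega$ and genus at least $1$, yet violate $[\Sigma]^2+|K_\omega\cdot[\Sigma]|\le 2g-2$. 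What Taubes' nonvanishing in the $\omega$-chamber yields when $b_2^+=1$ is only the one-sided inequality $[S]^2+\langle K_\omega,[S]\rangle\le 2g-2$, and for a section this is vacuous: since $TE|_S\cong\nu_S\oplus TS$ one has $\langle K_\omega,[S]\rangle=(2g-2)-[S]^2$ identically, so that inequality is an equality (indeed $S$ is symplectic for the Thurston form and satisfies the adjunction formula). All the content of the proposition sits on the $-K_\omega$ side, and conjugation symmetry exchanges the chambers, so nonvanishing of the small-perturbation invariant of $K_\omega$ does not deliver it. Moreover the $b_2^+=1$ situation genuinely occurs for $g=1$ (take holonomy over $T^2$ with $\rho(a)=\phi$ acting on $H^1(\Sigma_h;\mathbb{R})$ without invariant vectors and $\rho(b)=\mathrm{id}$; then $b_1(E)=2$, $b_2(E)=2$, spanned by fibre and section, so $b_2^\pm(E)=1$), so your argument as written does not prove the $g=1$ case; for $g\ge2$ you also never verify $b_2^+>1$.

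Both holes are closed by the devices the paper uses. For $g\ge 2$, Kotschick's signature bound $|\sigma(E)|\le\chi(E)$ for these aspherical bundles gives $2\min\{b_2^+,b_2^-\}=b_2(E)-|\sigma(E)|\ge 2b_1(E)-2\ge 4g-2\ge 6$, so $b_2^+(E)\ge 3$ in either orientation and the standard two-sided adjunction inequality applies. For $g\le 1$ the paper avoids gauge theory over the small base entirely: pull back under a degree-$d$ self-map of $S^2$ or $T^2$, which multiplies $[S]^2$ by $d$, then under a degree-one collapse $\Sigma_2\to\Sigma_g$, and the $g=2$ bound forces $[S]^2=0$. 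Finally, your parenthetical alternative via boundedness of the vertical Euler class needs care in this paper's logical order: the sharp bound $\|e\|_\infty=\tfrac12$ is deduced \emph{from} the present proposition, and the independent route through stable commutator lengths of Dehn twists gives boundedness but not the constant required for the stated bound $2g-2$.
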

\begin{proof}
We first consider the case $g \geq 2$. Since the genus of the fibre is greater than 1, we may apply the Thurston construction to obtain a symplectic form on $E$ with respect to which $S$ is symplectic \emph{and} $[S]^2 \geq 0$. Because the genus of fibre and base are positive, the bundle $E$ is aspherical and it follows from Proposition 1 of \cite{Kot1} that
\[|\sigma(E)| \leq \chi(E).\]
This gives a lower bound for $b^+_2(E)$ and $b^-_2(E)$, since
\[2\text{min}\{b^+_2(E), b^-_2(E)\} = b_2(E) - |\sigma(E)|\]
and
\[ b_2(E) - |\sigma(E)| = \chi(E) - 2 + 2b_1(E) - |\sigma(E)| \geq 2b_1(E) - 2 \geq 6.\]
Thus in either orientation $b_2^+(E) \geq 3$ and by applying the adjunction inequality (cf.\ \cite{GS}) we conclude that
\begin{equation*}
\label{adj_inq}  |[S]^2| \leq |[S]^2| + |K.[S]| \leq 2g(S) - 2.
\end{equation*}
If $g < 2$ then as both $S^2$ and $T^2$ have self maps of arbitrary degree we may assume after taking coverings that $|[S]^2|$ is large, say $|[S]^2|  > 3$ or that $|[S]^2|=0$ in which case the inequality is trivially true. In the former case we take the pullback bundle under a degree 1 collapsing map from a genus $2$ surface $\Sigma_2$ to $\Sigma_g$. The resulting bundle has a section $S'$ of self-intersection 
\[|[S']^2| > 3 > 2g(\Sigma_2) - 2,\]
which yields a contradiction.
\end{proof}
As a consequence of Proposition \ref{section_bound} we will prove that the class $e \in H^2(\Gamma_{h,1})$ is bounded in the sense of Gromov. This fact is originally due to Morita, whose proof uses the boundedness of the Euler class in $Homeo^+(S^1)$ as proved by Wood (cf.\ \cite{Mor3}). On the other hand we will present two alternate proofs, the first of which relies on Proposition \ref{section_bound}, which is in turn a consequence of the adjunction inequality, whilst the second uses results of \cite{EnK}. Before giving these proofs we first recall the definition of the Gromov-Thurston (semi-)norm.
\begin{defn}[Gromov-Thurston norm]
Let $X$ be any topological space and let $\alpha \in H_2(X, \mathbb{Z})$. Define the minimal genus $g_{min}(\alpha)$ of $\alpha$ to be the minimal $g$ so that $\alpha$ is representable as the image of the fundamental class under a map $\Sigma_g \to X$ modulo torsion. We define the Gromov-Thurston norm to be
\[||\alpha||_{GT} = \lim\limits_{n \to \infty} \frac{2g_{min}(n \alpha) - 2}{n}.\]
\end{defn}
\noindent Gromov has also defined another norm on real homology. This is the so-called $l^1$-norm and is defined as follows.
\begin{defn}[$l^1$-norm]
Let $ c = \sum_i \lambda_i \sigma_i$ be a chain in $C_k(X, \mathbb{R})$. We define the $l^1$-norm of $c$ to be
\[ || c ||_1 =  \sum_i |\lambda_i|.\]
For a class $\alpha \in H_k(X, \mathbb{R})$ we define
\[ ||\alpha||_1 = \inf \ \{ \  || z||_1 \ | \ \alpha = [z] \}. \]
\end{defn}
 In degree 2 the Gromov-Thurston norm agrees with the $l^1$-norm $|| \cdot ||_1$ up to a constant. We record this in the following lemma, a proof of which is given in \cite{BGh}.
\begin{lem}\label{Th_Gr}
Let $\alpha \in H_2(X, \mathbb{Z})$. Then the following equality holds
\[ || \alpha ||_1 = 2 || \alpha ||_{GT}.\]
\end{lem}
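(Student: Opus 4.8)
The plan is to prove the two inequalities $\|\alpha\|_1 \leq 2\|\alpha\|_{GT}$ and $\|\alpha\|_1 \geq 2\|\alpha\|_{GT}$ separately. In both directions the point of contact is the elementary comparison between the number of singular $2$-simplices in an efficient cycle and the genus of a surface, together with the value $\|[\Sigma_g]\|_1 = \max\{0,4g-4\}$ of the simplicial volume of a closed oriented surface (for the first inequality the crude estimate $\|[\Sigma_g]\|_1 \leq 4g$, visible from an explicit triangulation, already suffices).

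For $\|\alpha\|_1 \leq 2\|\alpha\|_{GT}$: for each $n$ pick a map $f\colon \Sigma_g \to X$ with $f_*[\Sigma_g] = n\alpha$ modulo torsion and $g = g_{min}(n\alpha)$. As a continuous map does not increase the $l^1$-seminorm, $n\|\alpha\|_1 = \|f_*[\Sigma_g]\|_1 \leq \|[\Sigma_g]\|_1 \leq 4g = 2(2g_{min}(n\alpha)-2)+4$. Dividing by $n$ and letting $n\to\infty$ gives $\|\alpha\|_1 \leq 2\|\alpha\|_{GT}$.

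For the reverse inequality I would first pass to integral cycles. Writing $\|\cdot\|_1^{\mathbb{Z}}$ for the infimum of the $l^1$-norm over \emph{integral} representatives, one has $\|n\alpha\|_1^{\mathbb{Z}}/n \to \|\alpha\|_1$: the bound $\|n\alpha\|_1^{\mathbb{Z}} \geq \|n\alpha\|_1 = n\|\alpha\|_1$ is immediate, and conversely one approximates a near-optimal real cycle for $\alpha$ by a rational one --- rational cycles are dense among real cycles because the boundary operator has integer entries --- and then clears denominators. Now fix $\varepsilon>0$ and choose $n$ together with an integral cycle $z=\sum_{i=1}^{m}\sigma_i$ for $n\alpha$ with coefficients $\pm1$ and $m = \|z\|_1 \leq n(\|\alpha\|_1+\varepsilon)$. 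Since $\partial z=0$, the $3m$ codimension-one faces of the $\sigma_i$ can be matched in pairs of equal singular $1$-simplices carrying opposite induced orientations; gluing the corresponding edges of the model $2$-simplices yields a closed oriented pseudo-surface $S$ together with a map $S\to X$ that realises the chain $z$, hence carries $[S]$ to $n\alpha$ modulo torsion. Resolving the finitely many non-manifold vertices and connect-summing the $c$ resulting components to a connected surface $\widehat S$, the counts $F=m$ and $E=3m/2$ give $2g(\widehat S)-2 = -\chi(\widehat S) \leq \tfrac m2 + c - 2$, so that $2g_{min}(n\alpha)-2 \leq \tfrac n2(\|\alpha\|_1+\varepsilon) + c - 2$. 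Dividing by $n$ and letting first $n\to\infty$ and then $\varepsilon\to 0$ gives $\|\alpha\|_{GT}\leq \tfrac12\|\alpha\|_1$, \emph{provided} the component count $c=c(n)$ grows sublinearly in $n$.

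That last proviso is the crux, and the one step I expect to be genuinely delicate: naively gluing up an efficient integral cycle usually produces a disconnected pseudo-surface, and connect-summing $c$ pieces costs $c-1$ in the genus, so one must arrange that near-optimal integral cycles for $n\alpha$ can be chosen whose associated surface is connected (or has only $o(n)$ components). This is precisely the classical problem of realising homology classes efficiently by surfaces, and the details are carried out in \cite{BGh}; the remaining ingredients used above --- monotonicity of the $l^1$-norm under maps, the triangulation and simplicial-volume bounds for surfaces, and the passage from real to integral cycles --- are routine.
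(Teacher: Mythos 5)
The paper offers no proof of this lemma at all: it simply records the statement and cites \cite{BGh}, so there is no in-paper argument to compare yours against. Your outline is the standard proof and is sound as far as it goes: the easy inequality via $\|[\Sigma_g]\|_1\le 4g$ is correct, and in the harder direction the reduction to integral cycles with $\pm1$ coefficients, the pairing of faces, the gluing/resolution, and the Euler-characteristic bookkeeping are all right. One small imprecision: the rational approximation must be performed \emph{within the fixed class}; the routine fix is to write a near-optimal real cycle as $z_\alpha+\partial b$ with $z_\alpha$ integral and perturb the real $3$-chain $b$ rationally, rather than perturbing the cycle itself.

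The proviso you flag, controlling the component count $c(n)$, is the one genuine gap, but it can be closed by a stabilisation trick rather than by any delicate choice of efficient cycles, and this is the sort of argument carried out in \cite{BGh}. Two observations suffice. First, sphere components are free: the genus of a connected sum is the sum of the genera, so only positive-genus components contribute, and for those $2g_j-2=-\chi(S_j)\le m_j/2$, whence $\sum_{g_j\ge1}(2g_j-2)\le m/2$. Second, the additive penalty $2(c'-1)$ from the $c'$ positive-genus components is killed by stabilising once more: replace each positive-genus component by a connected degree-$N$ cover (these exist for all $N$ since $g_j\ge1$, via $\pi_1\twoheadrightarrow\mathbb{Z}\to\mathbb{Z}/N$) and each sphere component by $N$ disjoint copies. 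The resulting surface represents $Nn\alpha$ modulo torsion, and its connected sum has $2g-2=N\sum_{g_j\ge1}(2g_j-2)+2c'-2\le N\,\tfrac m2+2c'-2$. Dividing by $Nn$ and letting $N\to\infty$ (the limit defining $\|\cdot\|_{GT}$ exists by subadditivity of $g_{min}$, so it may be computed along this subsequence) gives $\|\alpha\|_{GT}\le\tfrac m{2n}\le\tfrac12(\|\alpha\|_1+\varepsilon)$ with no hypothesis on $c(n)$ whatsoever. With this amendment your proof is complete, and it is in substance the argument the paper outsources to \cite{BGh}.
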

If in addition the group $H_2(X, \mathbb{Z})$ is finitely generated, we may extend $|| \cdot ||_{GT}$ to a semi-norm on $H_2(X, \mathbb{R})$ and the equality of Lemma \ref{Th_Gr} holds for this extension. We shall denote this extension again by $|| \cdot ||_{GT}$.
\begin{lem}\label{Th_Gr2}
Let $H_2(X, \mathbb{Z})$ be finitely generated. Then there is a unique extension of $|| \cdot ||_{GT}$ to $H_2(X, \mathbb{R})$, for which $|| \cdot ||_1 = 2 || \cdot ||_{GT}$.
\end{lem}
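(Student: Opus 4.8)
The plan is to produce the extension explicitly and then check it is forced. Set $||\alpha||_{GT} := \frac{1}{2}||\alpha||_1$ for every $\alpha \in H_2(X,\mathbb{R})$. First I would verify this is a genuine seminorm. Since $H_2(X,\mathbb{Z})$ is finitely generated, the universal coefficient theorem gives $V := H_2(X,\mathbb{R}) \cong H_2(X,\mathbb{Z})\otimes\mathbb{R}$, which is finite dimensional, and the image $\Lambda$ of $H_2(X,\mathbb{Z})$ spans $V$ over $\mathbb{R}$. Each integral class is carried by a \emph{finite} singular cycle, so $||\cdot||_1$ is finite on a spanning set and hence finite valued on $V$, while homogeneity over $\mathbb{R}$ and subadditivity are the standard properties of the $l^1$-seminorm on real chains. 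By Lemma \ref{Th_Gr} the restriction of $\frac{1}{2}||\cdot||_1$ to $H_2(X,\mathbb{Z})$ equals the originally defined $||\cdot||_{GT}$ (torsion classes map to $0$ in $V$, where both seminorms vanish), so $\frac{1}{2}||\cdot||_1$ is indeed an extension, and by construction it satisfies $||\cdot||_1 = 2||\cdot||_{GT}$.

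Next I would prove uniqueness. Let $\nu$ be any seminorm on $V$ whose restriction to $\Lambda$ coincides with $||\cdot||_{GT}$. Because $\Lambda$ spans $V$ over $\mathbb{R}$, the rational span $\mathbb{Q}\cdot\Lambda$ is dense in $V$. Given $v \in \mathbb{Q}\cdot\Lambda$, choose $q \in \mathbb{Z}_{>0}$ with $\mu := qv \in \Lambda$; then $q\,\nu(v) = \nu(\mu) = ||\mu||_{GT} = \frac{1}{2}||\mu||_1 = q\cdot\frac{1}{2}||v||_1$, so $\nu$ and $\frac{1}{2}||\cdot||_1$ agree on $\mathbb{Q}\cdot\Lambda$. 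Any seminorm on a finite-dimensional real vector space is Lipschitz with respect to any norm and hence continuous, so $\nu$ and $\frac{1}{2}||\cdot||_1$, agreeing on a dense subset, agree on all of $V$. Thus $\frac{1}{2}||\cdot||_1$ is the unique seminorm extending $||\cdot||_{GT}$, and it satisfies $||\cdot||_1 = 2||\cdot||_{GT}$, as claimed.

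There is no serious obstacle: the content is carried entirely by Lemma \ref{Th_Gr}, together with the soft facts that the $l^1$-seminorm is finite valued when $H_2$ is finitely generated and that a seminorm on a finite-dimensional space is continuous and determined by its values on any spanning lattice. The only point worth flagging is that finite generation of $H_2(X,\mathbb{Z})$ is used twice — once to make $V$ finite dimensional (so that the continuity and density arguments apply) and once to guarantee that $||\cdot||_1$, and hence the extension, is finite valued rather than a map into $[0,\infty]$.
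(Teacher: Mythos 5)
Your argument is correct, and in fact it supplies a proof where the paper gives none: Lemma \ref{Th_Gr2} is simply asserted (the preceding sentence treats the extension as standard), so there is no proof of record to compare against. Your route is the natural one: define the extension as $\frac{1}{2}||\cdot||_1$ on $H_2(X,\mathbb{R})$, use Lemma \ref{Th_Gr} (and the vanishing of both seminorms on torsion) to see it restricts correctly, and get uniqueness from homogeneity on rational multiples of integral classes together with the facts that a finite-valued seminorm on a finite-dimensional space is Lipschitz and that the rational span of the integral lattice is dense; this actually proves the stronger statement that $\frac{1}{2}||\cdot||_1$ is the \emph{only} seminorm extension, which in particular gives the lemma. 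One small correction to your closing remark: finite generation of $H_2(X,\mathbb{Z})$ is not needed to make $||\cdot||_1$ finite valued --- every real homology class is represented by a finite singular cycle, so the $l^1$-seminorm is always finite --- its only genuine role is to make $H_2(X,\mathbb{R})$ finite dimensional so that the continuity and density arguments in your uniqueness step apply.
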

By considering the natural pairing between homology and cohomology, one obtains a norm on cohomology that is dual to the $l^1$-norm. This norm agrees with the $l^{\infty}$-norm as introduced by Gromov (cf.\ \cite{Gro}). We then say that a cohomology class is \emph{bounded}, if it is bounded with respect to the $l^{\infty}$-norm. One may reformulate this definition in a slightly different manner. For this we let $H^*_b(X)$ denote the \emph{bounded cohomology}  of $X$, which is defined as the cohomology of the complex of $l^{\infty}$-bounded singular cochains. Then a cohomology class is bounded if and only if it lies in the image of the natural comparison map
\[H_b^*(X) \to H^*(X).\] We may now prove the boundedness of the vertical Euler class.
\begin{prop}[Morita]\label{Euler_bounded}
Let $h \geq 2$, then the vertical Euler class $e \in H^2(\Gamma_{h,1})$ is bounded and $|| e||_{\infty} = \frac{1}{2}$.
\end{prop}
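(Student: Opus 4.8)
The plan is to read off $\|e\|_{\infty}$ directly from its description as the seminorm on $H^2(\Gamma_{h,1};\mathbb{R})$ dual to the $l^1$-norm on $H_2(\Gamma_{h,1};\mathbb{R})$. Since $H_2(\Gamma_{h,1};\mathbb{Z})$ is finitely generated, Lemmas \ref{Th_Gr} and \ref{Th_Gr2} allow one to replace $\|a\|_1$ by $2\|a\|_{GT}$, so the whole computation reduces to minimal-genus estimates. The relevant dictionary, recorded in the discussion above, is that a map $f\colon \Sigma_g \to B\Gamma_{h,1}$ is the same thing as a surface bundle $\Sigma_h \to E \to \Sigma_g$ equipped with a section $S$, and that $\langle e, f_*[\Sigma_g]\rangle = [S]^2$; since $e$ pairs trivially with torsion classes, the ``modulo torsion'' in the definition of $g_{min}$ is harmless.

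\emph{Upper bound.} Fix $a \in H_2(\Gamma_{h,1};\mathbb{Z})$ and for each $n$ pick $f_n\colon \Sigma_{g_n} \to B\Gamma_{h,1}$ representing $na$ with $g_n = g_{min}(na)$; this is a $\Sigma_h$-bundle over $\Sigma_{g_n}$ with a section $S_n$, and $[S_n]^2 = \langle e, na\rangle = n\langle e, a\rangle$. As $h \ge 2$, Proposition \ref{section_bound} gives $|n\langle e, a\rangle| \le \max\{0, 2g_n - 2\}$; dividing by $n$ and letting $n \to \infty$ yields $|\langle e, a\rangle| \le \|a\|_{GT} = \tfrac12\|a\|_1$. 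Since this holds for all integral $a$ and both norms extend continuously to real coefficients, $e$ is bounded and $\|e\|_{\infty} \le \tfrac12$. This already reproves Morita's boundedness statement, here using the adjunction inequality built into Proposition \ref{section_bound}.

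\emph{Lower bound.} It remains to produce a class realising the ratio $\tfrac12$. Let $f_\Delta\colon \Sigma_h \to B\Gamma_{h,1}$ classify the trivial bundle $\Sigma_h \times \Sigma_h \to \Sigma_h$ together with its diagonal section, and put $a = (f_\Delta)_*[\Sigma_h]$, so that $|\langle e, a\rangle| = |[\Delta_h]^2| = 2h-2 > 0$. To bound $\|a\|_{GT}$ from above, observe that for every $n$ there is an $n$-fold covering $p_n\colon \Sigma_{g_n} \to \Sigma_h$ with $g_n = n(h-1)+1$, obtained from a surjection $\pi_1(\Sigma_h) \to \mathbb{Z}/n$. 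Pulling the bundle-with-section back along $p_n$ has classifying map $f_\Delta \circ p_n$, hence represents the class $na$, so $g_{min}(na) \le n(h-1)+1$ and
\[ \|a\|_{GT}\ \le\ \lim_{n\to\infty}\frac{2\bigl(n(h-1)+1\bigr)-2}{n}\ =\ 2h-2 . \]
Consequently $\|a\|_1 = 2\|a\|_{GT} \le 2(2h-2)$, and
\[ \|e\|_{\infty}\ \ge\ \frac{|\langle e, a\rangle|}{\|a\|_1}\ \ge\ \frac{2h-2}{2(2h-2)}\ =\ \frac12 . \]
Together with the upper bound this gives $\|e\|_{\infty} = \tfrac12$, and as a by-product $\|[\Delta_h]\|_1 = 4h-4$, i.e.\ the diagonal attains the Gromov--Thurston norm of its class.

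I expect the lower bound to be the subtle step: getting the exact value $\tfrac12$, as opposed to mere finiteness, requires computing $\|[\Delta_h]\|_{GT}$ on the nose, and the naive connected-sum estimate only gives the weaker bound $2h$; the covering construction is what sharpens it, and one must remember that $\|\cdot\|_{GT}$ is a stable invariant, so these bundles are needed for all $n$ (not just one). Finally, as noted in the introduction, the boundedness of $e$ can alternatively be obtained without gauge theory by applying Bavard duality to the central extension $1 \to \mathbb{Z} \to \Gamma^1_h \to \Gamma_{h,1} \to 1$ and invoking the positivity of the stable commutator length of a boundary-parallel Dehn twist in $\Gamma^1_h$ from \cite{EnK}.
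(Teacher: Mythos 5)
Your proposal is correct and follows essentially the same route as the paper: the upper bound $\|e\|_{\infty}\le \tfrac12$ via Proposition \ref{section_bound} applied to genus-minimising representatives of $n\alpha$ together with Lemmas \ref{Th_Gr} and \ref{Th_Gr2}, and the lower bound via the diagonal section of $\Sigma_h\times\Sigma_h$ stabilised by finite coverings. Your explicit cyclic-cover computation of $g_{min}(na)\le n(h-1)+1$ and the remark about torsion merely make precise steps the paper leaves implicit, and your closing remark reproduces the paper's alternative argument via Lemma \ref{bound_commutator} and \cite{EnK}.
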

\begin{proof}
For any natural number $n \in \mathbb{N}$ suppose that $ n\alpha \in H_2(\Gamma_{h,1})$ can be represented as the image of the fundamental class under a map $\Sigma_{n \alpha} \to B\Gamma_{h,1}$ and assume that $\Sigma_{n \alpha}$ is genus minimising. This in turn corresponds to a surface bundle with a section $S_{n\alpha}$ and by Proposition \ref{section_bound} we have that
\begin{equation}\label{adj_euler} 
 n \thinspace |e(\alpha)| = |e(n\alpha)| = |[S_{n\alpha}]^2| \leq 2g(\Sigma_{n \alpha}) - 2 = 2g_{min}(n \alpha) - 2
\end{equation}
and thus
\[|e(\alpha)| \leq \lim\limits_{n \to \infty} \frac{2g_{min}(n \alpha) - 2}{n} = || \alpha ||_{GT}.\]
Hence for integral classes $\alpha \in H_2(X, \mathbb{Z})$ Lemma \ref{Th_Gr} yields
\[ \frac{|e(\alpha)|}{|| \alpha ||_1} \leq \frac{1}{2}.\]
Now since the group $\Gamma_{h,1}$ is finitely presented (cf.\ \cite{Iva}, Theorem 4.3 D), it follows that $H_2(\Gamma_{h,1})$ is finitely generated. Thus we may extend the Gromov-Thurston norm to real cohomology by Lemma \ref{Th_Gr2} and 
\[\sup_{\alpha \neq 0} \frac{|e(\alpha)|}{|| \alpha ||_1} \leq \frac{1}{2}.\]
It follows that $e$ is a bounded class with $||e||_{\infty} \leq \frac{1}{2}$. We will show that this bound is sharp. 

Let $h \geq 2$ and let $S$ be the diagonal section of $\Sigma_h \times \Sigma_h$. This section has self-intersection $2 -2h$ and we denote the corresponding class $\alpha \in H_2(\Gamma_{h,1})$. By taking coverings we obtain sections $S_{n\alpha}$ so that
\[  |[S_{n\alpha}]^2| = |e(n \alpha)| = n (2h -2) = 2g(S_{n\alpha}) - 2.\]
Thus it follows that inequality (\ref{adj_euler}) is an equality for this $\alpha$ and hence
\[||e||_{\infty} = \sup_{ \alpha \neq 0} \frac{|e(\alpha)|}{|| \alpha ||_1} \geq \frac{1}{2}.\]
Combining this with our previous estimate gives the result.
\end{proof}
One may also give another proof of the boundedness of $e$ using results on stable commutator lengths of Dehn twists as proven in \cite{EnK}. Here as usual the commutator length $c_G(g)$ of an element $g$ in a group $G$ is defined as the smallest natural number so that $g$ can be written as a product of $c_G(g)$ commutators. The \emph{stable commutator length} is then defined as
\[|| g ||_{com} = \lim\limits_{n \to \infty} \frac{c_G(g^n) }{n}.\]
There is a relationship between stable commutator lengths and quasi-homomorphisms. We recall that a real valued map $\phi$ on a group $G$ is a quasi-homomorphism if the following supremum is finite
\[D(\phi) = \sup |\phi(gh) - \phi(g) - \phi(h)|.\]
The number $D(\phi)$ is called the \emph{defect} of $\phi$. A quasi-homomorphism is homogeneous if $\phi(g^n) = n \phi(g)$ for all integers $n$. For a group $G$ we let $\widetilde{QH}(G)$ denote the group of homogeneous quasi-homomorphisms. With this notation we have the following fundamental result of Bavard (cf.\ also \cite{Cal}).
\begin{thm}[\cite{Bav}]\label{length_Bavard}
For any element $g$ in a group $G$ the following holds
\[|| g ||_{com} = \sup \frac{|\phi(g)|}{2D(\phi)} \thinspace ,\]
where the supremum is taken over all $\phi \in \widetilde{QH}(G)$ and $D(\phi)$ is the defect of $\phi$.
\end{thm}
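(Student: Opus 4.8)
We prove the two inequalities separately; throughout we assume that $g$ lies in the commutator subgroup $[G,G]$, which is the case in which both sides are finite, and we abbreviate $\mathrm{scl}(g)=\|g\|_{com}$. \emph{The inequality} $\mathrm{scl}(g)\ge\sup_{\phi}|\phi(g)|/(2D(\phi))$ is elementary. One first records the standard consequences of homogeneity: a homogeneous quasi-homomorphism $\phi$ is conjugation invariant and satisfies $\phi(x^{-1})=-\phi(x)$, whence $|\phi([a,b])|\le D(\phi)$ for a single commutator, while quasi-additivity gives $|\phi(w_1\cdots w_c)-\sum_i\phi(w_i)|\le(c-1)D(\phi)$. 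Combining these, if $g^n$ is a product of $c=c_G(g^n)$ commutators then $n|\phi(g)|=|\phi(g^n)|\le(2c-1)D(\phi)$; dividing by $n$ and letting $n\to\infty$ gives $|\phi(g)|\le 2\,\mathrm{scl}(g)\,D(\phi)$.

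\emph{The reverse inequality} is the substantial direction, and it is here that Hahn--Banach enters. The plan is to realise $\mathrm{scl}$ as one half of a genuine seminorm on a real vector space whose topological dual is exactly $\widetilde{QH}(G)$, with dual norm $2D(\cdot)$. Concretely, let $C_1=C_1(G;\mathbb{R})$ be the free real vector space on $G$, let $B_1=\partial_2 C_2(G;\mathbb{R})\subset C_1$ be the group $1$-boundaries, and let $H\subset C_1$ be the subspace spanned by the homogenising relations $g^n-n\,g$. On $(B_1+H)/H$ define the filling seminorm $\|a\|=\inf\{\|b\|_1: b\in C_2(G;\mathbb{R}),\ \partial_2 b\equiv a \bmod H\}$, where $\|\cdot\|_1$ is the $\ell^1$-norm on $2$-chains in the standard (nondegenerate-simplex) basis. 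The first key step is the identification $\mathrm{scl}(g)=\tfrac12\|g\|$ for $g\in[G,G]$: this is the dictionary between writing $g$ as a product of $c$ commutators and bounding $g$ by a one-holed genus-$c$ surface mapped to $BG$, together with the fact that the infimum of $-\chi^{-}/2$ over such surfaces, normalised by how many times the boundary wraps $g$, computes $\mathrm{scl}$.

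The second key step is the dualisation: a bounded linear functional on $C_1$ is simply a function $\phi:G\to\mathbb{R}$, the requirement that it kill $H$ forces $\phi(g^n)=n\phi(g)$, and the requirement that it be bounded for the filling seminorm unwinds, via the definition of $\|\cdot\|_1$ on $2$-chains, to the statement that $\delta\phi(g,h)=\phi(g)+\phi(h)-\phi(gh)$ is a bounded cochain --- that is, $D(\phi)<\infty$ --- with operator norm $2D(\phi)$ in the normalisation fixed above. (Equivalently, this is the isomorphism between $\widetilde{QH}(G)/H^1(G;\mathbb{R})$ and the kernel $EH^2_b(G;\mathbb{R})$ of the comparison map $H^2_b\to H^2$, paired against $\ell^1$-homology.) Granting the two key steps, Bavard duality becomes a formal application of Hahn--Banach, which says that the value of a seminorm at a vector is the supremum of $|\ell(v)|$ over functionals $\ell$ in the dual unit ball:
\[\mathrm{scl}(g)=\tfrac12\|g\|=\tfrac12\sup\{\,|\phi(g)|:\phi\in\widetilde{QH}(G),\ 2D(\phi)\le 1\,\}=\sup_{\phi}\frac{|\phi(g)|}{2D(\phi)},\]
with the usual convention that the supremum runs over $\phi$ with $D(\phi)>0$ and is read as $0$ when there are none (consistent with $\mathrm{scl}$ vanishing on $[G,G]$ in that case, e.g.\ for amenable $G$).

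\emph{Main obstacle.} All the difficulty is concentrated in the two \emph{key steps} of the reverse inequality: proving $\mathrm{scl}(g)=\tfrac12\|g\|$ for the filling seminorm, which needs the $\mathrm{scl}$-versus-genus dictionary and care with degenerate simplices and with the normalisation by boundary multiplicity; and identifying the filling-bounded functionals with the finite-defect homogeneous quasi-homomorphisms, with matching norms. In particular one must verify that the functional produced by Hahn--Banach, a priori controlled only on the subspace where it was defined, is genuinely a quasi-homomorphism on all of $G$; this is precisely what the boundedness estimate against the filling seminorm delivers. Once the normed-space picture is set up correctly, the extraction of the quasi-homomorphism, and with it the theorem, is immediate.
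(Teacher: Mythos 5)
The paper does not actually prove this theorem: it is quoted from Bavard's paper (cf.\ also Calegari's monograph \cite{Cal}), so the comparison is with the standard proof in the literature, whose architecture your outline follows. Your easy direction is complete and correct: homogeneity gives conjugation invariance and antisymmetry, hence $|\phi([a,b])|\le D(\phi)$, and the count $|\phi(g^n)|\le (2c-1)D(\phi)$ with $\phi(g^n)=n\phi(g)$ yields $|\phi(g)|\le 2\,\mathrm{scl}(g)\,D(\phi)$.

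The reverse direction is where all the content lies, and as written it does not close. First, your two normalisation claims are mutually inconsistent: if $\mathrm{scl}(g)=\tfrac12\|g\|$ and the dual norm of $\phi$ is $2D(\phi)$, then Hahn--Banach gives $\mathrm{scl}(g)=\tfrac12\sup\{|\phi(g)|:2D(\phi)\le 1\}=\tfrac12\sup_\phi |\phi(g)|/(2D(\phi))$, so your final displayed equality asserts $\tfrac12 X=X$. With the standard $\ell^1$-normalisation the correct constants are $\mathrm{scl}(g)=\tfrac14\,\mathrm{fill}(g)$ (a once-punctured genus-$h$ surface has relative Gromov norm $4h-2$; e.g.\ in $F_2$ one has $\mathrm{fill}([a,b])=2$ while $\mathrm{scl}([a,b])=\tfrac12$) and dual norm $\tfrac12 D(\phi)$; note that the naive estimate $|\phi(\partial b)|=|\delta\phi(b)|\le D(\phi)\,\|b\|_1$ only gives dual norm $D(\phi)$, and sharpening it to $D(\phi)/2$ is part of the real work. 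Second, both of your ``key steps'' --- the identification of $\mathrm{scl}$ with the filling seminorm, and the statement that a functional bounded for that seminorm (a priori defined only on $(B_1+H)/H$) comes from a homogeneous quasi-homomorphism of finite defect with matching norm --- are precisely the substance of Bavard's theorem and are asserted rather than proved, as you yourself acknowledge. Since in this theorem the exact constant is the theorem (the paper's application in Theorem \ref{unbound} depends on the sharp factor), what you have is a correct roadmap of the standard Hahn--Banach proof with a factor-of-two error at the crux, not yet a proof.
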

The relationship between stable commutator lengths and the boundedness of elements in the group cohomology $H^2(G)$ is given in the following lemma.
\begin{lem}\label{bound_commutator}
Let $\phi$ be a non-torsion element in $H^2(G)$, which corresponds to a central $\mathbb{Z}$-extension
\[ 1 \to \mathbb{Z} \to \hat{G} \stackrel{\pi} \rightarrow G \to 1.\]
Further, let $\Delta$ denote a generator of the kernel. Then $\phi$ is bounded if and only if the stable commutator length of $\Delta$ is positive.
\end{lem}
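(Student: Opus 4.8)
The plan is to identify the boundedness of the class $\phi \in H^2(G)$ with the positivity of the stable commutator length of $\Delta$ via Bavard duality (Theorem \ref{length_Bavard}). The bridge between the two is the classical description of $H^2_b(G)$ and $H^2(G)$ in terms of quasi-homomorphisms: recall that there is an exact sequence
\[
0 \to H^1(G;\mathbb{R}) \to \widetilde{QH}(G) \stackrel{\delta}\to H^2_b(G;\mathbb{R}) \to H^2(G;\mathbb{R}),
\]
where $\delta$ sends a homogeneous quasi-homomorphism $\psi$ to the bounded class represented by the coboundary cocycle $(g,h) \mapsto \psi(g) + \psi(h) - \psi(gh)$, and the last map is the comparison map. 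Thus a class in the kernel of the comparison map is bounded-and-exact, i.e.\ in the image of $\delta$, precisely when it arises from a quasi-homomorphism that fails to be a genuine homomorphism.

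First I would analyse the extension. Pick a set-theoretic section $s\colon G \to \hat G$ of $\pi$ with $s(1)=1$; then $\phi$ is represented by the $2$-cocycle $c(g,h) = s(g)s(h)s(gh)^{-1} \in \mathbb{Z}\cdot\Delta \cong \mathbb{Z}$. The key observation is: $\phi$ is a bounded class if and only if $c$ differs from a coboundary by a bounded cocycle, and since $c$ is already integer-valued (hence the question is only whether it is cohomologous to a bounded cocycle over $\mathbb{R}$), this happens if and only if there is a function $f\colon G \to \mathbb{R}$ with $c(g,h) - (f(g) + f(h) - f(gh))$ bounded. Rewriting, $f$ must then satisfy that $g \mapsto \widetilde{s}(g) := $ "the real number $f(g)$ thought of inside the $\mathbb{R}$-central extension" is a quasi-homomorphism on $\hat G$ restricting to $1$ on $\langle \Delta \rangle$; its homogenization $\psi$ is then a homogeneous quasi-homomorphism on $\hat G$ with $\psi(\Delta) \neq 0$ that is \emph{not} a homomorphism. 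Concretely: $\phi$ is bounded $\iff$ there exists a homogeneous quasi-homomorphism $\psi \in \widetilde{QH}(\hat G)$ with $\psi(\Delta) \neq 0$.

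Next I would connect this to scl via Bavard. If $\Delta$ has positive stable commutator length in $\hat G$, then by Theorem \ref{length_Bavard} there is $\psi \in \widetilde{QH}(\hat G)$ with $|\psi(\Delta)| > 0$, and $\Delta$ being central it lies in $[\hat G,\hat G]$ (since $\phi$ is torsion-free as a class, $\Delta$ has infinite order but maps to zero in $H_1$), so the previous paragraph gives that $\phi$ is bounded. Conversely, if $\phi$ is bounded, the previous paragraph produces $\psi \in \widetilde{QH}(\hat G)$ with $\psi(\Delta)\neq 0$ and finite defect, so Bavard's formula gives $\|\Delta\|_{com} \geq |\psi(\Delta)|/(2 D(\psi)) > 0$. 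I also need to note $\Delta \in [\hat G,\hat G]$ so that $\|\Delta\|_{com}$ is finite and the statement "positive" is meaningful; this uses that $\phi$ non-torsion in $H^2(G)$ forces $\Delta$ to have infinite order and that $\Delta$ is central hence a commutator-length computation applies — more carefully, one checks $\Delta \in [\hat G,\hat G]$ directly, e.g.\ because the image of $H_1(\hat G) \to H_1(G)$ has full rank and the cokernel on $H_2$ sides is controlled, or simply by observing that $\hat G/[\hat G,\hat G]$ is a quotient of $\hat G/\langle\Delta\rangle$-abelianization extended by a possibly-finite cyclic group.

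The main obstacle I anticipate is the careful bookkeeping in the middle step: making precise the identification "$\phi$ bounded $\iff$ there is a homogeneous quasi-homomorphism on $\hat G$ non-trivial on $\Delta$," which is exactly the statement that a class in $\ker(H^2_b \to H^2)$ restricted appropriately detects $\Delta$. The cleanest route is probably to invoke the five-term exact sequence in bounded cohomology for $1 \to \mathbb{Z} \to \hat G \to G \to 1$ together with $H^2_b(\mathbb{Z}) = 0$ and $H^1_b = 0$, giving $H^2_b(G) \cong H^2_b(\hat G)$ and reducing everything to: the comparison map $H^2_b(\hat G) \to H^2(\hat G)$ kills $\pi^*\phi$ $\iff$ $\pi^*\phi$ comes from $\widetilde{QH}(\hat G)$, while $\pi^*\phi = 0$ in $H^2(\hat G)$ already (it is the obstruction to splitting, which splits after pulling back since $\Delta$ becomes a genuine element) — so $\pi^*\phi$ is bounded iff it is the coboundary of a quasi-homomorphism, and evaluating that quasi-homomorphism on $\Delta$ recovers whether the original cocycle $c$ was bounded. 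I would spell this out and then close with the Bavard-duality sandwich above.
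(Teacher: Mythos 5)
Your proposal is correct and arrives at the same pivotal equivalence as the paper --- $\phi$ is bounded if and only if there is a homogeneous quasi-homomorphism on $\hat G$ not vanishing on $\Delta$ --- but your main argument reaches it by a different, more hands-on route. The paper proves this by a diagram chase: it places the Bavard five-term sequences of $G$ and $\hat G$ in a commutative ladder, uses amenability of $\mathbb{Z}$ to see that $\pi^*\colon H^2_b(G)\to H^2_b(\hat G)$ is an isomorphism, and uses that the Euler class spans $\ker\bigl(\pi^*\colon H^2(G;\mathbb{R})\to H^2(\hat G;\mathbb{R})\bigr)$; exactness then produces (and, conversely, consumes) the quasi-homomorphism $\phi_e\in\widetilde{QH}(\hat G)$ with $\phi_e(\Delta)\neq 0$. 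Your core argument is instead an explicit cocycle-level construction: from a primitive $f$ with $c-\delta f$ bounded you build $\tilde\psi(\Delta^n s(g))=n+f(g)$ and homogenize, and conversely from a homogeneous $\psi$ with $\psi(\Delta)=1$ you set $f(g)=\psi(s(g))$ and bound $c-\delta f$ by $D(\psi)$. This is more elementary (no mapping theorem, no ladder), at the cost of the small lemma that homogeneous quasi-homomorphisms are additive on commuting elements, which you should state explicitly to make precise the phrase ``restricting to $1$ on $\langle\Delta\rangle$''. (You also sketch the paper's five-term/amenability route as the ``cleanest'' alternative, so the two proofs overlap there.) The Bavard-duality step at the end is identical to the paper's.

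Two small refinements. Your parenthetical claim that $\Delta\in[\hat G,\hat G]$ is not quite right in general: if $\phi$ is divisible, the image of $\Delta$ in $H_1(\hat G)$ can be a nontrivial finite cyclic group. What is true, and all you need, is that $\phi$ being non-torsion forces the image of $\Delta$ in $H_1(\hat G;\mathbb{R})$ to vanish, so some power of $\Delta$ lies in $[\hat G,\hat G]$, the stable commutator length $\|\Delta\|_{com}$ is finite, and Bavard duality applies; the same observation rules out a genuine homomorphism with $\psi(\Delta)\neq 0$, so the defect $D(\psi)$ in your final estimate is indeed positive. The paper glosses both points, so these are clarifications rather than gaps in your argument.
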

\begin{proof}
Applying the Bavard sequence, we obtain the following diagram
\[\xymatrix{0 \ar[r] & H^1(G,\mathbb{R}) \ar[r] \ar[d] &\widetilde{QH}(G) \ar[r] \ar[d] & H_b^2(G) \ar[r] \ar[d]^{\pi^*} & H^2(G,\mathbb{R})\ar[d]\\
0 \ar[r] & H^1(\hat{G},\mathbb{R}) \ar[r]  &\widetilde{QH}(\hat{G}) \ar[r]  & H_b^2(\hat{G}) \ar[r]& H^2(\hat{G},\mathbb{R}). }\]
The fact that $\mathbb{Z}$ is amenable implies that the map $H_b^2(G) \stackrel{\pi^*} \rightarrow H_b^2(\hat{G})$ is an isomorphism. Furthermore, since the Euler class of the extension $e \in H^2(G)$ spans the kernel of the map to $H^2(\hat{G},\mathbb{R})$, exactness implies that $e$ is bounded if and only if there is element $\phi_e \in \widetilde{QH}(\hat{G})$ that is mapped to $e$ under the composition of the comparison map to $H^2(G,\mathbb{R})$ with the inverse of $\pi^*$. Since any homogeneous quasi-homomorphism that vanishes on $\mathbb{Z}$ descends to the quotient, we deduce that $\phi_{e}$ does not vanish on $\mathbb{Z}$. Hence by Theorem \ref{length_Bavard} we conclude that $e$ is bounded if and only if the stable commutator length of $\Delta$ is positive.
\end{proof}
In order to deduce the boundedness of $e$ from Lemma \ref{bound_commutator} we consider its defining central $\mathbb{Z}$-extension
\[1 \to \mathbb{Z} \to \Gamma^1_h \to \Gamma_{h,1} \to 1.\]
The kernel of this extension is generated by a Dehn twist $\phi_C$ around a curve that is parallel to $C = \partial \Sigma^1_h$. Now the group $\Gamma^1_h$ embeds into $ \Gamma_{h+1}$ and the image of $\phi_C$ is a Dehn twist around a homotopically non-trivial, separating curve in $\Sigma_{h+1}$. By \cite{EnK} there is a positive lower bound on the stable commutator length of $\phi_C$ considered as an element in $ \Gamma_{h+1}$ and hence the same holds in $\Gamma^1_h$. Thus Lemma \ref{bound_commutator} implies that $e$ is bounded.

\section{Bounds on self-intersection numbers of multisections}
The self-intersection of a $k$-multisection in a surface bundle gives a characteristic class $e^v_k \in H^2(\Gamma_{h,k})$.  As mentioned in the introduction Mitsumatsu and Vogt have asked whether there is a universal bound on self-intersection numbers of multisections in surfaces bundles with fibre and base of fixed genus (cf.\ Question \ref{Mit_Vogt_Prob}). Or in other words, they ask if there exists a constant $C(h,g)$ so that
\[\sup_{k \in \mathbb{N}}  |e^v_k(E)| \leq C(h,g) < \infty,\]
where the supremum is taken over all bundles with a $k$-multisection that have fibre and base of genus $h$ and $g$ respectively. An initial observation is that the classes $e^v_k$ are bounded and that there is an obvious upper bound for $||e^v_k||_{\infty}$.
\begin{prop}\label{Euler_bounded_k}
Let $h \geq 2$, then the classes $e^v_k \in H^2(\Gamma_{h,k})$ are bounded and $||e^v_k||_{\infty} \leq \frac{k}{2}$.
\end{prop}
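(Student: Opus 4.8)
The plan is to bound the operator norm of $e^v_k$ directly, dualising against the $l^1$-norm on $H_2(\Gamma_{h,k};\mathbb{R})$: it suffices to show $|e^v_k(\alpha)| \le \tfrac{k}{2}||\alpha||_1 = k\,||\alpha||_{GT}$ for every class $\alpha$. As in the proof of Proposition \ref{Euler_bounded}, I would first establish this for integral classes and then extend it to $H_2(\Gamma_{h,k};\mathbb{R})$, using that $\Gamma_{h,k}$ is finitely presented — so that $H_2(\Gamma_{h,k})$ is finitely generated — together with Lemmas \ref{Th_Gr} and \ref{Th_Gr2}. So fix a non-torsion integral class $\alpha$ and, for each $n$, represent $n\alpha$ by a genus-minimising map $\Sigma_{g_{min}(n\alpha)} \to B\Gamma_{h,k}$. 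This classifies a $\Sigma_h$-bundle $E$ over $\Sigma := \Sigma_{g_{min}(n\alpha)}$ equipped with a $k$-multisection $S \subset E$, and by construction $[S]^2 = e^v_k(n\alpha) = n\,e^v_k(\alpha)$. Since $B\Gamma_{h,k}$ is aspherical, a non-torsion class is never carried by a sphere, so $g_{min}(n\alpha)\ge 1$ for all $n$.

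The heart of the argument is to estimate $|[S]^2|$ using Proposition \ref{section_bound}. The point is that $S$ is embedded and $\pi|_S : S \to \Sigma$ is an unbranched $k$-fold covering; writing $S = S_1 \sqcup \dots \sqcup S_m$ for the connected components, each $\pi|_{S_j} : S_j \to \Sigma$ is a covering of some degree $d_j$ with $\sum_j d_j = k$, and since the $S_j$ are pairwise disjoint the cross terms drop out, so that $[S]^2 = \sum_j [S_j]^2$. Next I would pull back $E$ along $\pi|_{S_j}$ to obtain a $\Sigma_h$-bundle over $S_j$ with a tautological (``diagonal'') section $\sigma_j$; the induced bundle map down to $E$ is a $d_j$-fold covering whose restriction to $\sigma_j$ is a diffeomorphism onto $S_j$, and since a covering identifies the normal bundle of such a section with that of its image, $[\sigma_j]^2 = [S_j]^2$.

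It then remains to combine the pieces. Proposition \ref{section_bound} applied to the section $\sigma_j$ (the fibre genus being $h\ge 2$) gives $|[\sigma_j]^2| \le \max\{0,\,2g(S_j)-2\}$, while Riemann--Hurwitz for the degree-$d_j$ unbranched covering $S_j \to \Sigma$ gives $2g(S_j)-2 = d_j(2g(\Sigma)-2) \ge 0$, using $g(\Sigma) = g_{min}(n\alpha)\ge 1$. Summing over $j$,
\[
n\,|e^v_k(\alpha)| = |[S]^2| \le \sum_j\bigl(2g(S_j)-2\bigr) = \Bigl(\sum_j d_j\Bigr)\bigl(2g(\Sigma)-2\bigr) = k\bigl(2g_{min}(n\alpha)-2\bigr),
\]
so dividing by $n$ and letting $n\to\infty$ yields $|e^v_k(\alpha)| \le k\,||\alpha||_{GT} = \tfrac{k}{2}||\alpha||_1$; the inequality is trivial for torsion classes. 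Extending to real coefficients as above then gives $||e^v_k||_\infty \le \tfrac{k}{2}$.

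The step I expect to be the main obstacle — or at least the one needing the most care — is the covering-space bookkeeping in the middle paragraph: a $k$-multisection is not a section, but each of its components \emph{is} an honest section of the bundle pulled back along a finite cover of the base, and one has to check both that this pull-back leaves the self-intersection number unchanged and that Riemann--Hurwitz accounts exactly for the factor $k$. Once that is set up the rest runs parallel to the first proof of Proposition \ref{Euler_bounded}, the only remaining subtlety being the routine treatment of torsion and low-genus classes, handled as in Proposition \ref{section_bound}.
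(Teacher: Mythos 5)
Your proposal is correct and takes essentially the same route as the paper, whose printed proof simply invokes the first part of the proof of Proposition \ref{Euler_bounded} \emph{mutatis mutandis}: your decomposition of the $k$-multisection into components, the passage to the pulled-back bundle over each covering component to obtain an honest section satisfying Proposition \ref{section_bound}, and the Riemann--Hurwitz count giving the factor $k$ are exactly the bookkeeping that the paper leaves implicit. The remaining steps (genus minimisers, finite generation of $H_2(\Gamma_{h,k})$, Lemmas \ref{Th_Gr} and \ref{Th_Gr2}) match the paper verbatim, so there is nothing to correct.
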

\begin{proof}
Since $\Gamma_{h,k}$ is finitely presented (cf.\ \cite{Iva}, Theorem 4.3 D) we may apply the first part of the proof of Proposition \ref{Euler_bounded} \emph{mutatis mutandis} to conclude that
\[ ||e^v_k||_{\infty} \leq \frac{k}{2}. \qedhere \]
\end{proof}
If one considers only multisections that are \emph{pure} in the sense that the multisection in question consists of $k$ disjoint sections, then one obtains a universal bound $C(h,g)$. A bundle that has a pure $k$-multisection is given by a holonomy map $\pi_1(\Sigma_g) \to P \Gamma_{h,k}$, where the $P$ means that the holonomy maps fix the marked points pointwise, then it follows from Proposition \ref{section_bound} that for fixed $g$ and $h$ the self-intersection number of any pure $k$-multisection is bounded.
\begin{prop}\label{pure_bound}
Let $\mathcal{C}_{pure}$ denote the class of all bundles with a pure multisection and let $g$ and $h$ be the genus of the base and fibre of $E$ respectively with $h \geq 2$. Then the following holds:
\[\sup_{k  \in \mathbb{N}, E \in \mathcal{C}_{pure}} |e^v_k(E)| \leq (2g - 2) (4gh +2).\]
\end{prop}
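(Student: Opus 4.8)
The plan is to bound $e^v_k(E)$ by writing the pure multisection as a union of disjoint sections, estimating each summand via Proposition~\ref{section_bound}, and then observing that only a number of these sections bounded in terms of $g$ and $h$ alone can contribute to the total self-intersection. Concretely, let $E\in\mathcal{C}_{pure}$ be a $\Sigma_h$-bundle over $\Sigma_g$ with $h\ge 2$, and write its pure $k$-multisection as a disjoint union of sections $S=S_1\sqcup\dots\sqcup S_k$. Since the base is a surface, $e^v_k(E)=[S]^2$; and since the $S_i$ are pairwise disjoint, $[S_i]\cdot[S_j]=0$ for $i\ne j$, so that
\[ e^v_k(E)=[S]^2=\sum_{i=1}^{k}[S_i]^2 .\]
By Proposition~\ref{section_bound} each term satisfies $|[S_i]^2|\le\max\{0,2g-2\}$. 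In particular, if $g\le 1$ then every $[S_i]^2=0$, so $e^v_k(E)=0$ and there is nothing to prove; we therefore assume $g\ge 2$.

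The key step is to show that $N:=\#\{\,i:[S_i]^2\ne 0\,\}\le b_2(E)$. For such $i$ the class $[S_i]$ is non-torsion in $H_2(E;\mathbb{Z})$ (the self-intersection pairing factors through $H_2$ modulo torsion), and by disjointness the Gram matrix of the family $\{[S_i]:[S_i]^2\ne 0\}$ with respect to the nondegenerate intersection form of the closed oriented $4$-manifold $E$ is diagonal with nonzero diagonal entries; hence these classes are linearly independent in $H_2(E;\mathbb{R})$, giving $N\le b_2(E)$. To bound $b_2(E)$, note that $\chi(E)=(2-2h)(2-2g)$, so by Poincar\'e duality $b_2(E)=\chi(E)-2+2b_1(E)$, while abelianising $1\to\pi_1(\Sigma_h)\to\pi_1(E)\to\pi_1(\Sigma_g)\to 1$ (equivalently, the Serre spectral sequence of the fibration) gives $b_1(E)\le b_1(\Sigma_h)+b_1(\Sigma_g)=2h+2g$. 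A one-line computation, $\chi(E)-2+2b_1(E)\le (2-2h)(2-2g)-2+2(2h+2g)=4gh+2$, then yields $b_2(E)\le 4gh+2$. Putting the pieces together,
\[ |e^v_k(E)|\le\sum_{[S_i]^2\ne 0}|[S_i]^2|\le N(2g-2)\le (2g-2)(4gh+2) .\]

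I do not expect a serious obstacle here. The one point that requires care is the separation into sections of zero and of nonzero self-intersection before invoking linear independence — disjoint, homologous sections of square zero genuinely occur, so one cannot simply argue that \emph{all} the classes $[S_i]$ are independent. What makes the final bound independent of $k$ is precisely the uniform estimate $b_2(E)\le 4gh+2$, valid for every surface bundle with the prescribed fibre and base genus: no matter how large $k$ may be, at most $4gh+2$ of the disjoint sections can carry nonzero self-intersection, and each such section contributes at most $2g-2$ by Proposition~\ref{section_bound}.
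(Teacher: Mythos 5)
Your proof is correct and follows essentially the same route as the paper: bound the number of disjoint sections with nonzero self-intersection by $b_2(E)\leq 4gh+2$ and apply Proposition~\ref{section_bound} to each such section. You merely make explicit two points the paper leaves implicit, namely the linear-independence argument (diagonal Gram matrix with nonzero entries) behind the bound by $b_2(E)$, and the computation $b_2(E)=\chi(E)-2+2b_1(E)\leq 4gh+2$.
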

\begin{proof}
First note that the number of sections that can have non-zero self-intersection is bounded by $b_2(E) \leq 4gh +2$ and this does not depend on $k$. By Proposition \ref{section_bound} each section has self-intersection at most $2g - 2$. Thus if the holonomy of a bundle is in $P \Gamma_{h,k}$ we conclude that
\[\sup_{k  \in \mathbb{N}, E \in \mathcal{C}_{pure}} |e^v_k(E)| \leq (2g - 2) (4gh +2). \qedhere \]
\end{proof}
Unfortunately the bound obtained in Proposition \ref{pure_bound} grows quadratically in $g$ and thus gives no bound on the norms $||e^v_k||_{\infty}$, which would in particular yield an affirmative answer to Question \ref{Mit_Vogt_Prob}. For given any bundle with a $k$-multisection and holonomy map $\pi_1(\Sigma_g) \stackrel{\rho} \rightarrow \Gamma_{h,k}$, there is a finite cover $\Sigma_{\bar{g}}  \stackrel{\tau} \rightarrow \Sigma_g$ of degree $N = k!$ and a lift $\bar{\rho}$ so that the following diagram commutes
\[\xymatrix{ P \Gamma_{h,k} \ar[r] & \Gamma_{h,k}\\
\pi_1(\Sigma_{\bar{g}}) \ar[r]^{\tau} \ar[u]^{\bar{\rho}} &  \pi_1(\Sigma_g) \ar[u]^{\rho}.}\]
Then if $E, \bar{E}$ denote the bundles associated to $\rho, \bar{\rho}$ respectively, we have
\[|N e^v_k(E)| = |e^v_k(\bar{E})| \leq (2\bar{g} - 2) (4\bar{g}h +2) = N (2g -2)(4(N(g-1) + 1)h + 2).\]
Thus, the bound we obtain on $e^v_k(E)$ in this way depends on $N = k!$ which in turn grows exponentially in $k$.

Instead of asking for a bound on the self-intersection numbers of multisections in surface bundles with fixed base and fibre, one may ask for a universal bound on the norms $||e^v_k||_{\infty}$ that is independent of $k$, as opposed to the bound given by Proposition \ref{Euler_bounded_k}, which grows linearly with $k$. This question may in turn be viewed as a stable or asymptotic version of the original question of Mitsumatsu and Vogt. Of course, it is strictly stronger than their original question and is in fact false. In order to show this we will translate the problem into a statement about the vertical Euler class of a certain stable group.

To this end we consider the sequence of inclusions of mapping class groups given by adding an annulus with one marked point to a genus $h$ surface with one boundary component and $k$ marked points:
\[... \to \Gamma^1_{h,k - 1} \to \Gamma^1_{h, k } \to \Gamma^1_{h, k + 1}  \to ... \ .\]
The injective limit of this sequence will be denoted by $\Gamma^1_{h, \infty}$ and the vertical Euler classes on $\Gamma^1_{h, k} $ defines a vertical Euler class $e^v_{\infty}$ in $H^2(\Gamma^1_{h, \infty})$.
\begin{lem}\label{bounded_stable}
Suppose that the norms $||e^v_k||_{\infty}$ are bounded independently of $k$. Then the class $e^v_{\infty}$ is bounded.
\end{lem}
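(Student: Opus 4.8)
The plan is to verify directly that $e^v_\infty$ satisfies the $\ell^\infty$-boundedness criterion, namely that
\[ \sup_{0 \ne c \in H_2(\Gamma^1_{h,\infty};\mathbb{R})} \frac{|\langle e^v_\infty, c\rangle|}{\|c\|_1} < \infty ; \]
as was already used in the proof of Proposition \ref{Euler_bounded} (see also \cite{Gro}), a degree-two cohomology class is bounded precisely when this quantity is finite, the point being a Hahn--Banach extension that requires no finite-generation hypothesis on $H_2$. The essential idea is that everything in the stable group $\Gamma^1_{h,\infty}$ already lives at some finite stage $\Gamma^1_{h,k}$, and that passing to that stage does not disturb the $\ell^1$-norm, so that the uniform bounds $\|e^v_k\|_\infty \le C$ can be applied stage by stage.

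First I would record the standard fact that group homology commutes with directed colimits, so that $H_2(\Gamma^1_{h,\infty};\mathbb{R}) = \varinjlim_k H_2(\Gamma^1_{h,k};\mathbb{R})$ with structure maps induced by the inclusions $\iota_k\colon \Gamma^1_{h,k}\hookrightarrow \Gamma^1_{h,\infty}$. Since the maps of the directed system are injective, each $\iota_k$ is injective, hence induces an injection of bar complexes $C_*(\Gamma^1_{h,k};\mathbb{R}) \hookrightarrow C_*(\Gamma^1_{h,\infty};\mathbb{R})$ which sends distinct basis tuples to distinct basis tuples; in particular it is isometric for the $\ell^1$-norm on chains and carries cycles to cycles. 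By construction the stable class $e^v_\infty$ restricts on each stage to the vertical Euler class of $\Gamma^1_{h,k}$, which is the pullback of the class $e^v_k \in H^2(\Gamma_{h,k})$; so $\iota_k^* e^v_\infty = e^v_k$, and since pullback does not increase the $\ell^\infty$-norm, the standing hypothesis furnishes a single constant $C$ with $\|e^v_k\|_\infty \le C$ for every $k$.

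Now I would carry out the estimate. Fix $c \in H_2(\Gamma^1_{h,\infty};\mathbb{R})$ and let $z$ be any real $2$-cycle representing it. Since $z$ involves only finitely many group elements and the system is directed, we have $z = \iota_{k,*}\tilde z$ for some $k$ and some $\tilde z \in C_2(\Gamma^1_{h,k};\mathbb{R})$; by the previous paragraph $\tilde z$ is a cycle, $\|\tilde z\|_1 = \|z\|_1$, and $\iota_{k,*}[\tilde z] = c$. Hence
\[ |\langle e^v_\infty, c\rangle| = |\langle \iota_k^* e^v_\infty, [\tilde z]\rangle| = |\langle e^v_k, [\tilde z]\rangle| \le \|e^v_k\|_\infty\,\|[\tilde z]\|_1 \le C\,\|\tilde z\|_1 = C\,\|z\|_1 . \]
Taking the infimum over all cycles $z$ representing $c$ yields $|\langle e^v_\infty, c\rangle| \le C\,\|c\|_1$, so the supremum above is at most $C < \infty$ and $e^v_\infty$ is bounded.

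The one point that genuinely needs care — and the main, if mild, obstacle — is the interaction between the colimit and the $\ell^1$-norm: one must check that approximating $\|c\|_1$ by representatives living at a \emph{finite} stage costs nothing (this is exactly where injectivity of the directed system and the absence of cancellation under $\iota_{k,*}$ enter), and, crucially, that the bounds $\|e^v_k\|_\infty \le C$ are uniform in $k$, which is precisely the hypothesis being assumed. Everything else is formal: homology commutes with directed colimits, $\iota_k^* e^v_\infty = e^v_k$ by the construction of $e^v_\infty$, pullback is $\ell^\infty$-norm non-increasing, and finiteness of the pairing semi-norm detects boundedness of a class in $H^2$.
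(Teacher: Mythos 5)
Your proof is correct and follows essentially the same route as the paper: represent a stable class at a finite stage $\Gamma^1_{h,k}$, apply the uniform bound $\sup_k||e^v_k||_{\infty}\leq C$ there, and check that passing to the limit does not disturb the $\ell^1$-norm. Your cycle-level argument (distinct bar-resolution tuples map to distinct tuples, so a representing cycle lifts to a finite stage with the same $\ell^1$-norm, and one then takes the infimum over representatives) is in fact a more careful rendering of the paper's step ``by choosing the $\alpha_k$ appropriately we may assume $||\alpha||_1=\lim_{k\to\infty}||\alpha_k||_1$''.
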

\begin{proof}
We let $C = \sup_k ||e^v_k||_{\infty}$. This also serves as a universal bound on the associated classes on $\Gamma^1_{h, k }$ and hence for any homology class $\alpha \in H_2(\Gamma^1_{h, \infty})$ one has
\begin{align*}
|e^v_{\infty}(\alpha)| = |e^v_{k}(\alpha_n)| & \leq ||e^v_k||_{\infty} \thinspace ||\alpha_k||_1\\
& \leq C ||\alpha_k||_1,
\end{align*}
where $\alpha_k$ is any class in $H^2(\Gamma^1_{h, k})$ which projects to $\alpha$ in the limit. By choosing the $\alpha_k$ appropriately we may assume that $||\alpha||_1 = \lim\limits_{k \to \infty} ||\alpha_k||_1$. Thus after applying limits to the inequality above we conclude that $|e^v_{\infty}(\alpha)| \leq  C ||\alpha||_1$ and hence that $e^v_{\infty}$ is a bounded class.
\end{proof}
In order to show that the class $e^v_{\infty}$ is not bounded, it will be important to have an explicit description of the classes $e^v_{k}$ on the level of group cohomology. To this end we let $\Sigma^{k,1}_h$ be a surface with $k + 1$ boundary components and we choose identifications $h_l: S^1 \to \partial \Sigma^{k,1}_h$ of the $l$-th boundary component with the circle for $1 \leq l \leq k$. We let $Diff^+(\Sigma^{k,1}_h)$ denote the group of all orientation preserving diffeomorphisms of $\Sigma^{k + 1}_h$ that fix the $(k+1)$-st boundary component pointwise and let $\sigma$ denote the natural map $Diff^+(\Sigma^{k,1}_h) \to S_k$ given by the action on the boundary components. We next consider the subgroup $\widehat{Diff^+}(\Sigma^{k,1}_h)$ of $Diff^+(\Sigma^{k,1}_h)$ defined by taking those elements $\phi$ that have the additional property that $\phi \circ h_l = h_{\sigma(l)}$ for all $1 \leq l \leq k$. The group of components of $\widehat{Diff^+}(\Sigma^{k,1}_h)$ will be denoted $\hat{\Gamma}^1_{h,k}$. This group fits into the following exact sequence, where the second map is given by coning off the boundary components and the kernel is generated by Dehn twists around parallels of the boundary components: 
\[ 1 \to \mathbb{Z}^k \to \hat{\Gamma}^1_{h,k} \to \Gamma^1_{h,k}  \to 1.\]
The conjugation action of $\Gamma^1_{h,k}$ on $\mathbb{Z}^k$ is given by the natural action of the symmetric group. Thus, this extension gives an element  $\hat{e}_k^v$ in cohomology with twisted coefficients $H^2(\Gamma^1_{h,k}, \mathbb{Z}^k)$, where the action is given via the natural map to $S_k$. The map $\mathbb{Z}^k \to \mathbb{Z}$ sending each basis vector to $1$ is $S_k$-equivariant and induces a map to ordinary cohomology so that the image of $\hat{e}_k^v$ is $e^v_k \in H^2(\Gamma^1_{h,k}, \mathbb{Z})$. The associated central $\mathbb{Z}$-extension will be denoted by $\overline{\Gamma}^1_{h,k}$. By considering the injective limits of the natural inclusions 
\[... \to \hat{\Gamma}^1_{h,k - 1} \to \hat{\Gamma}^1_{h, k} \to \hat{\Gamma}^1_{h, k + 1}  \to ... ,\]
we obtain a group $\hat{\Gamma}^1_{h,\infty}$ that fits into the following short exact sequence 
\[ 1 \to \mathbb{Z}^{\infty}  \to \hat{\Gamma}^1_{h,\infty} \to \Gamma^1_{h,\infty} \to 1.\]
This extension corresponds to an element  $\hat{e}_{\infty}^v \in H^2(\Gamma^1_{h,\infty}, \mathbb{Z}^{\infty})$, which denotes the cohomology group taken with twisted $\mathbb{Z}^{\infty}$-coefficients. As above, there is an $S_{\infty}$-equivariant map $\mathbb{Z}^{\infty} \to \mathbb{Z}$ sending each basis vector to $1$ that induces a map to ordinary cohomology and the image of $\hat{e}_{\infty}^v$ is the class $e^v_{\infty}$ defined above. The class $e^v_{\infty}$ then determines a central $\mathbb{Z}$-extension 
\[ 1 \to  \mathbb{Z}  \to \overline{\Gamma}^1_{h,{\infty}} \to \Gamma^1_{h,{\infty}} \to 1.\]
Furthermore, the group $\overline{\Gamma}^1_{h,{\infty}}$ fits into the following exact sequence
\[ 1 \to K  \to \hat{\Gamma}^1_{h,\infty} \to \overline{\Gamma}^1_{h,{\infty}} \to 1,\]
where $K$ is the kernel of the map $\mathbb{Z}^{\infty} \to \mathbb{Z}$.

The group $\hat{\Gamma}^1_{h,\infty}$ may also be interpreted as the mapping class group of a certain non-compact surface with infinitely many boundary components. More precisely, we let $\Sigma^{\infty,1}_{h}$ be a genus $h$ surface having an infinite end with infinitely many open discs removed and we identify the $l$-th boundary component of $\Sigma^{\infty,1}_{h}$ with $S^1$ via a map $h_l$. We let $Diff^c(\Sigma^{\infty,1}_{h})$ denote the group of compactly supported diffeomorphisms of $\Sigma^{\infty,1}_{h}$ and $\sigma$ the natural map $Diff^c(\Sigma^{\infty,1}_{h}) \to S_{\infty}$ given by the induced action on the boundary components. The group $\widehat{Diff^c}(\Sigma^{\infty,1}_{h})$ is then defined to be the subgroup of $\phi \in Diff^c(\Sigma^{\infty,1}_{h})$ such that $\phi \circ h_l = h_{\sigma(l)}$ and its group of components is naturally isomorphic to $\hat{\Gamma}^1_{h,\infty}$.

As a final preliminary we note that any homogeneous quasi-homomorphisms on $\hat{\Gamma}^1_{0,\infty}$ is in fact a homomorphism. The group $\hat{\Gamma}^1_{0,n} = \hat{B}_{n}$ is called the extended braid group since it fits into the following extension
\[ 1 \to  \mathbb{Z}^{n} \to \hat{\Gamma}^1_{0,{n}} \to \Gamma^1_{0,{n}} = B_{n} \to 1,\]
where $B_{n}$ denotes the ordinary braid group on $n$-strands. Kotschick has proven that the only homogeneous quasi-homomorphisms on $B_{\infty}$ are homomorphisms and the proof of the following proposition is almost identical to that of Theorem 3.5 in \cite{Kot3}.
\begin{prop}\label{stable_Kotschick}
Any homogeneous quasi-homomorphism on the group $\hat{B}_{\infty}$ is a homomorphism.
\end{prop}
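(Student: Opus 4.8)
The plan is to mimic the proof of Theorem 3.5 in \cite{Kot3}, where the corresponding statement is proved for the stable braid group $B_\infty$; that argument is a ``swindle'' which applies to any group that possesses, for each finitely generated subgroup, a family of pairwise‑commuting, disjointly supported conjugate copies (Kotschick's \emph{dissipated} groups). So the first and main task is to exhibit such copies inside $\hat{B}_\infty$, using the interpretation of $\hat{B}_\infty = \hat{\Gamma}^1_{0,\infty}$ as the group of components of $\widehat{Diff^c}(\Sigma^{\infty,1}_0)$.

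Concretely, I would arrange the parametrized boundary circles $h_l\colon S^1 \to \partial\Sigma^{\infty,1}_0$ to be translates of a single model circle marching out along the infinite end. Fixing $n$, identify $\hat{B}_n = \hat{\Gamma}^1_{0,n}$ with the subgroup of $\hat{B}_\infty$ of mapping classes supported on a compact planar subsurface $D_0$ that carries the first $n$ holes with their parametrizations; then choose pairwise disjoint copies $D_0, D_1, D_2, \dots$, each carrying $n$ parametrized holes, together with compactly supported diffeomorphisms $t_j$ of $\Sigma^{\infty,1}_0$ carrying $D_0$ onto $D_j$ and matching up the hole parametrizations. Because the circles were chosen as translates, $t_j$ can be taken inside $\widehat{Diff^c}(\Sigma^{\infty,1}_0)$, i.e. to respect the $h_l$ up to the induced permutation; this is where the infinite end provides the necessary room, and it is the one place that needs genuine care. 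Conjugation by $t_j$ then gives injective homomorphisms $\psi_j\colon \hat{B}_n \hookrightarrow \hat{B}_\infty$ with image supported on $D_j$ (and $\psi_0 = \mathrm{id}$); disjointness of the $D_j$ makes the subgroups $\psi_j(\hat{B}_n)$ pairwise commute, and $\psi_j(c)$ is conjugate to $c$ in $\hat{B}_\infty$ for every $c$, by construction.

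Granting this, the swindle runs as follows. Let $f$ be a homogeneous quasi‑homomorphism on $\hat{B}_\infty$ with defect $D(f)$, and let $c \in [\hat{B}_\infty, \hat{B}_\infty]$; then $c$ lies in some $\hat{B}_n$ and is a product $c = \prod_{i=1}^{k}[a_i,b_i]$ of $k = k(c)$ commutators there. For each $M$ I would set
\[ c^{(M)} = \prod_{j=0}^{M-1}\psi_j(c) = \prod_{i=1}^{k}\Bigl[\,\prod_{j=0}^{M-1}\psi_j(a_i)\,,\ \prod_{j=0}^{M-1}\psi_j(b_i)\,\Bigr], \]
the second equality coming from the disjointness of the $M$ blocks (expand each $\psi_j(c)$ into its $k$ commutators and collect over $j$). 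Thus $c^{(M)}$ is a product of only $k$ commutators regardless of $M$, so $|f(c^{(M)})| \le (2k-1)D(f)$ by the standard estimates for homogeneous quasi‑homomorphisms; on the other hand $f$ is additive on commuting elements and conjugation‑invariant, so $f(c^{(M)}) = \sum_j f(\psi_j(c)) = M f(c)$. Letting $M \to \infty$ forces $f(c) = 0$.

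Hence $f$ vanishes on $[\hat{B}_\infty, \hat{B}_\infty]$, and a homogeneous quasi‑homomorphism that does so is automatically a homomorphism: for any $g,h$ the element $(gh)^n h^{-n} g^{-n}$ is a product of commutators, so $f$ kills it, while $|f((gh)^n h^{-n} g^{-n}) - n f(gh) + n f(h) + n f(g)| \le 2D(f)$; dividing by $n$ and letting $n \to \infty$ gives $f(gh) = f(g) + f(h)$. Equivalently $\widetilde{QH}(\hat{B}_\infty) = H^1(\hat{B}_\infty;\mathbb{R})$, which is the claim. The hard part is really just the first step — checking that the block‑sliding diffeomorphisms are compactly supported and strictly respect the boundary parametrizations; once that is in hand the rest is formal and is verbatim Kotschick. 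In particular the extra ``framing'' summand $\mathbb{Z}^{\infty} \subset \hat{B}_\infty$ poses no obstruction: differences of the boundary Dehn twists are themselves commutators (the twists being permuted transitively), so they are absorbed by the same swindle, while the surviving part of $\mathbb{Z}^{\infty}$ contributes only the honest homomorphism ``total framing''.
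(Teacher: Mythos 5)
Your proposal is correct and takes essentially the same route as the paper: the key input in both is the family of disjointly supported, pairwise-commuting, mutually conjugate copies of $\hat{B}_n$ inside $\hat{B}_\infty$ obtained from disjoint embeddings $\Sigma^{n,1}_0 \hookrightarrow \Sigma^{\infty,1}_0$, after which the paper simply cites Proposition 2.2 of \cite{Kot3} while you write the commutator swindle out by hand. The only cosmetic adjustment needed is to enlarge $n$ so that the elements $a_i,b_i$ in the commutator expression of $c$, and not merely $c$ itself, lie in $\hat{B}_n$.
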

\begin{proof}
By considering disjoint embeddings $\Sigma^{n,1}_0 \hookrightarrow \Sigma^{\infty,1}_0$ one obtains infinitely many embeddings $\hat{B}_{n} \hookrightarrow \hat{B}_{\infty}$ that are conjugate in $\hat{B}_{\infty}$ and such that any two elements that lie in distinct embeddings commute in $\hat{B}_{\infty}$. Thus, by Proposition 2.2 in \cite{Kot3} any homogeneous quasi-homomorphism $\phi$ on $\hat{B}_{\infty}$ restricts to a homomorphism on $\hat{B}_{n}$. Furthermore, any element in $\hat{B}_{\infty}$ lies in some $\hat{B}_{n}$ and, hence, $\phi$ is in fact a homomorphism on the whole group $\hat{B}_{\infty}$.
\end{proof}
\noindent We now come to the proof that the class $e^v_{\infty}$ is not bounded.
\begin{thm}\label{unbound}
The class $e^v_{\infty} \in H^2(\hat{\Gamma}^1_{h,\infty})$ is unbounded.
\end{thm}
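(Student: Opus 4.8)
The plan is to use the dictionary between boundedness of an Euler class and positivity of a stable commutator length provided by Lemma~\ref{bound_commutator}, together with Bavard duality. Recall that $e^v_\infty$ is the Euler class of the central extension $1 \to \mathbb{Z} \to \overline{\Gamma}^1_{h,\infty} \to \Gamma^1_{h,\infty} \to 1$, and that it is non-torsion because its restriction to each finite stage $\Gamma^1_{h,k}$ is the non-trivial class $e^v_k$. Hence Lemma~\ref{bound_commutator} applies and it suffices to prove that the generator $\Delta$ of the kernel has vanishing stable commutator length in $\overline{\Gamma}^1_{h,\infty}$. Note that $\Delta$ is the common image of all the boundary-parallel Dehn twists $\tau_i \in \mathbb{Z}^\infty \subset \hat{\Gamma}^1_{h,\infty}$ under the quotient $\hat{\Gamma}^1_{h,\infty} \to \overline{\Gamma}^1_{h,\infty} = \hat{\Gamma}^1_{h,\infty}/K$.

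By Bavard duality (Theorem~\ref{length_Bavard}) this amounts to showing that every homogeneous quasi-homomorphism of $\overline{\Gamma}^1_{h,\infty}$ vanishes on $\Delta$. Since a homogeneous quasi-homomorphism is a genuine homomorphism on any abelian subgroup, any such $\phi$ on $\hat{\Gamma}^1_{h,\infty}$ is a homomorphism on $\mathbb{Z}^\infty = \langle\tau_i\rangle$; as the $\tau_i$ are pairwise conjugate it takes a single value $c = \phi(\tau_i)$ there, hence vanishes on $K = \ker(\mathbb{Z}^\infty \to \mathbb{Z})$ and descends to $\overline{\Gamma}^1_{h,\infty}$, with $\phi(\tau_i)$ corresponding to its value on $\Delta$. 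Thus the theorem is equivalent to the assertion that \emph{every homogeneous quasi-homomorphism of $\hat{\Gamma}^1_{h,\infty}$ vanishes on the boundary Dehn twists $\tau_i$}; in particular $[\tau_i] = 0$ in $H_1(\hat{\Gamma}^1_{h,\infty})$.

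To establish this I would first restrict to the braid subgroup: choosing a planar subsurface of $\Sigma^{\infty,1}_h$ carrying all the holes gives an embedding $\hat{B}_\infty = \hat{\Gamma}^1_{0,\infty} \hookrightarrow \hat{\Gamma}^1_{h,\infty}$ sending $\tau_i$ to $\tau_i$, and by Proposition~\ref{stable_Kotschick} the restriction to $\hat{B}_\infty$ of any homogeneous quasi-homomorphism of $\hat{\Gamma}^1_{h,\infty}$ is a homomorphism; so the common value $c$ is controlled entirely by the class of $\tau_i$ in homology. It remains to force $c = 0$, and here I would exploit that, once sufficiently many holes are available, a high power $\tau_i^N$ can be realised as a single commutator in $\hat{\Gamma}^1_{h,\infty}$ — using a finite-order mapping class cyclically permuting disjoint hole-parallel curves together with an appropriate product of twists about them, together with the null-homologousness of $[\tau_i]$ (to be checked from the stable homology of these mapping class groups, in the spirit of \cite{Kot3}). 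For a single commutator one has $|\phi(\tau_i^N)| = N|\phi(\tau_i)| \le D(\phi)$, and letting $N \to \infty$ gives $\phi(\tau_i) = 0$. Combined with Lemma~\ref{bounded_stable}, this also yields the negative answer to Question~\ref{Mit_Vogt_Prob_asymptotic}.

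The main obstacle is precisely this last point. Finite surface relations (lantern, chain) only express $\tau_i$, or a fixed power $\tau_i^N$, as a product of a number of commutators that, while bounded in terms of $h$, is positive; this shows merely that $\Delta$ has \emph{bounded} stable commutator length, which is entirely consistent with $e^v_\infty$ being bounded. What is genuinely needed is a stable vanishing phenomenon: one must use the infinitely many holes to push a Kotschick-type argument through — families of pairwise-commuting conjugates and self-embeddings of $\Sigma^{\infty,1}_h$ obtained by shifting the holes — to the conclusion that $\tau_i$ has \emph{zero} stable commutator length in $\hat{\Gamma}^1_{h,\infty}$, and simultaneously to verify the homology statement $[\tau_i] = 0$. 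Making this work uniformly as the number of holes grows is where the real content of the proof lies.
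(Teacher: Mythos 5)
Your reduction is the paper's: via Lemma~\ref{bound_commutator} and Bavard duality it suffices to kill the stable commutator length of the boundary twist, and Proposition~\ref{stable_Kotschick} is used to make any homogeneous quasi-homomorphism a genuine homomorphism on the braid subgroup $\hat{B}_{\infty}\subset\hat{\Gamma}^1_{h,\infty}$. But the decisive step is missing, and you say so yourself: your proposed mechanism --- realising arbitrarily high powers $\tau_i^N$ as a \emph{single} commutator in $\hat{\Gamma}^1_{h,\infty}$ (plus the auxiliary claim $[\tau_i]=0$ in $H_1$) --- is nowhere substantiated, and nothing in the paper or in \cite{Kot3} provides such a realisation. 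As it stands the argument proves only the reduction, not the theorem.

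The paper closes exactly this gap by a different mechanism, which does not require expressing any power of the twist as a single commutator and needs no $H_1$ computation. One considers the twists $\Delta_k$ about curves enclosing $k$ boundary components, notes that attaching $k$-holed discs to the boundary gives $\|\Delta_k\|_{com}\leq\|\Delta_1\|_{com}$, and then uses that any homogeneous quasi-homomorphism $\phi$ is a homomorphism on $\hat{B}_{\infty}$ together with conjugation-invariance to turn two lantern relations into the \emph{exact} identities $\phi(\Delta_3)=3\phi(\Delta_2)-3\phi(\Delta_1)$ and $\phi(\Delta_4)=6\phi(\Delta_2)-8\phi(\Delta_1)$. Assuming $\|\Delta_1\|_{com}>0$, Bavard duality (Theorem~\ref{length_Bavard}) supplies a quasi-homomorphism with $\phi(\Delta_1)/2D(\phi)\geq\tfrac{15}{16}\|\Delta_1\|_{com}$, and a short computation with the identity for $\phi(\Delta_4)$ and the inequality $\|\Delta_4\|_{com}\leq\|\Delta_1\|_{com}$ yields $\|\Delta_1\|_{com}\geq\tfrac{24}{16}\|\Delta_1\|_{com}$, a contradiction. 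So the lantern relation is not used, as you assumed, to write $\tau_i$ as a bounded product of commutators (which would indeed only bound the scl); it is used to produce linear constraints on the values of an almost-extremal quasi-homomorphism, and that is the ``stable vanishing phenomenon'' you correctly identified as the real content but did not supply.
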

\begin{proof}
We consider the central $\mathbb{Z}$-extension associated to $e^v_{\infty}$
\[ 1 \to  \mathbb{Z}  \to \overline{\Gamma}^1_{h,{\infty}} \to \Gamma^1_{h,{\infty}} \to 1,\]
whose kernel is generated by some $\Delta$. By Lemma \ref{bound_commutator}, $e^v_{\infty}$ is bounded if and only if the stable commutator length of $\Delta$ is positive. Hence, in order to show that the class $e^v_{\infty}$ is unbounded, it will suffice to show that the stable commutator length of $\Delta$ is zero.

The element $\Delta$ is the image of a Dehn twist around a boundary parallel curve in $\Sigma^{\infty,1}_{h}$ under the map $\hat{\Gamma}^1_{h,{\infty}} \to \overline{\Gamma}^1_{h,{\infty}}$. More generally, let $\gamma_k$ be a simple closed curve in $\Sigma^{\infty,1}_{h}$ that bounds $k$ boundary components and let $\Delta_k$ be a Dehn twist about $\gamma_k$. The stable commutator length of $\Delta_k$ depends only on $k$, since any two curves $\gamma_k$ and $\gamma'_k$ are conjugate by a diffeomorphism in $\widehat{Diff^c}(\Sigma^{\infty,1}_{h})$ so that the corresponding Dehn twists are conjugate in $\hat{\Gamma}^1_{h,\infty}$. In this notation $\Delta$ is the image of $\Delta_1$ under the map $\hat{\Gamma}^1_{h,{\infty}} \to \overline{\Gamma}^1_{h,{\infty}}$ and it follows that
\[|| \Delta ||_{com} \leq || \Delta_1 ||_{com}.\]
So it will be sufficient to show that $|| \Delta_1 ||_{com} = 0$.

We assume to the contrary that the stable commutator length of $\Delta_1$ is positive. We let $D_4 \hookrightarrow \Sigma^{\infty,1}_{h}$ be a disc with four smaller discs removed, the boundaries of which are mapped to boundary components of $\Sigma^{\infty,1}_{h}$. This inclusion gives a map of the compactly supported mapping class group of $D_4$ into  $\hat{\Gamma}^1_{h,\infty}$. We let $C_1,..,C_4$ be the interior boundary components of $D_4$. We also let $\Delta(i_1,..,i_m)$ denote the Dehn twist around an embedded closed curve containing $C_{i_1},.., C_{i_m}$. Then the following lantern relation holds in the compactly supported mapping class group of $D_4$ and, thus, also in $\hat{\Gamma}^1_{h,\infty}$:
\[\Delta(12) \Delta(23) \Delta(13) = \Delta(1) \Delta(2) \Delta(3) \Delta(123).\]
 We let $\phi$ be a homogeneous quasi-homomorphism on $\hat{\Gamma}^1_{h,\infty}$. An inclusion $\Sigma^{\infty,1}_{0} \hookrightarrow \Sigma^{\infty,1}_{h}$ induces an embedding $\hat{B}_{\infty}\hookrightarrow \hat{\Gamma}^1_{h,\infty}$ and by Proposition \ref{stable_Kotschick} any homogeneous quasi-homomorphism restricts to a homomorphism on $\hat{B}_{\infty}$. In particular, $\phi$ is a homomorphism on the normal subgroup generated by the $\Delta_k$. Then since a homogeneous quasi-homomorphism is constant on conjugacy classes the lantern relation implies that
\[\phi(\Delta_3) = 3\phi(\Delta_2) - 3 \phi(\Delta_1).\]
Similarly one has
\[\Delta(123) \Delta(234) \Delta(34) = \Delta(12) \Delta(3) \Delta(4) \Delta(1234)\]
and by the same reasoning as above it follows that
\[\phi(\Delta_4) = 6\phi(\Delta_2) - 8 \phi(\Delta_1).\]
Furthermore, the embedding $\Sigma^{\infty,1}_{h} \to \Sigma^{\infty,1}_{h}$ given by attaching a $k$-punctured disc to each boundary component induces a map on $\hat{\Gamma}^1_{h,\infty}$ that sends a Dehn twist around a boundary component to a Dehn twist about some $\gamma_k$ and, hence,
\begin{equation}\label{ineq_delta_0}
|| \Delta_k ||_{com} \leq || \Delta_1 ||_{com}.
\end{equation}
By Theorem \ref{length_Bavard} and the assumption that $|| \Delta_1 ||_{com}$ is positive, we may choose a homogeneous quasi-homomorphism $\phi$ such that
\begin{equation*}
\frac{\phi(\Delta_1)}{2D(\phi)} \geq \frac{15}{16}|| \Delta_1 ||_{com} > 0.
\end{equation*}
Since the stable commutator length of $\Delta_2$ is bounded from above by the stable commutator length of $\Delta_1$, by applying Theorem \ref{length_Bavard} once more we deduce that
\begin{equation}\label{ineq_delta_2}
|| \Delta_1 ||_{com} \geq \frac{\phi(\Delta_1)}{2D(\phi)} \geq \frac{15}{16}|| \Delta_1 ||_{com} \geq\frac{15}{16} \frac{\phi(\Delta_2)}{2D(\phi)}.
\end{equation}
Using inequalities (\ref{ineq_delta_0}) and (\ref{ineq_delta_2}) we then compute
\begin{align*}
|| \Delta_1 ||_{com} \geq || \Delta_4 ||_{com} & \geq  \frac{|\phi(\Delta_4)|}{2D(\phi)}\\
& = |6 \frac{\phi(\Delta_2)}{2D(\phi)} - 8 \frac{\phi(\Delta_1)}{2D(\phi)}  |\\
& = |\Bigl(2\frac{\phi(\Delta_1)}{2D(\phi)} - \frac{6}{16}\frac{\phi(\Delta_2)}{2D(\phi)}\Bigr) + 6 \Bigl(\frac{\phi(\Delta_1)}{2D(\phi)} - \frac{15}{16}\frac{\phi(\Delta_2)}{2D(\phi)}\Bigr)|  \\
& \geq 2\frac{\phi(\Delta_1)}{2D(\phi)} - \frac{6}{16}\frac{\phi(\Delta_2)}{2D(\phi)} \geq \frac{24}{16}|| \Delta_1 ||_{com}> || \Delta_1 ||_{com},
\end{align*}
which yields a contradiction.
\end{proof}
As an immediate consequence of Lemma \ref{bounded_stable} we have the following corollary.
\begin{cor}\label{stable_sequence}
The sequence $||e^v_k||_{\infty}$ is unbounded.
\end{cor}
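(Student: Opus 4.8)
The plan is to obtain this as a one-line formal consequence of the work already in place, namely Theorem \ref{unbound} together with the contrapositive of Lemma \ref{bounded_stable}. I would argue as follows. Lemma \ref{bounded_stable} asserts that if $\sup_k ||e^v_k||_{\infty} < \infty$, then the stable vertical Euler class $e^v_{\infty}$ on $\Gamma^1_{h,\infty}$ is bounded. On the other hand, Theorem \ref{unbound} asserts precisely that $e^v_{\infty}$ is \emph{not} bounded. Contraposing the implication of Lemma \ref{bounded_stable} and feeding in Theorem \ref{unbound} then forces $\sup_k ||e^v_k||_{\infty} = \infty$, i.e.\ the sequence $||e^v_k||_{\infty}$ is unbounded, which is the claim; in particular this gives the promised negative answer to the asymptotic Question \ref{Mit_Vogt_Prob_asymptotic}.

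The only point requiring any attention is that the objects line up correctly. The classes $e^v_k$ are defined on $\Gamma^1_{h,k}$ (equivalently on $\Gamma_{h,k}$, since the forgetful kernel is amenable and hence invisible to the $l^{\infty}$-norm), and the inclusions $\Gamma^1_{h,k-1} \to \Gamma^1_{h,k}$ pull $e^v_k$ back to $e^v_{k-1}$, so they assemble into a single class $e^v_{\infty}$ on the injective limit $\Gamma^1_{h,\infty}$ --- and it is exactly this $e^v_{\infty}$ for which Theorem \ref{unbound} is established. Thus the hypothesis $\sup_k ||e^v_k||_{\infty} < \infty$ of Lemma \ref{bounded_stable} is precisely the input that the contrapositive needs, and there is nothing further to verify.

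Consequently there is no genuine obstacle at the level of the corollary itself; all of the difficulty lies upstream. The substantive step is Theorem \ref{unbound}, whose proof runs the lantern relation in a four-holed subdisc of $\Sigma^{\infty,1}_{h}$ to relate the values $\phi(\Delta_k)$ for $k \le 4$ of a homogeneous quasi-homomorphism $\phi$, uses Proposition \ref{stable_Kotschick} (every homogeneous quasi-homomorphism on the stable extended braid group $\hat{B}_{\infty}$ is a homomorphism, hence restricts to one on the normal subgroup generated by the $\Delta_k$), and combines this with Bavard duality (Theorem \ref{length_Bavard}) to conclude $||\Delta_1||_{com} = 0$; Lemma \ref{bound_commutator} then converts the vanishing of this stable commutator length into the unboundedness of $e^v_{\infty}$. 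Once that machinery is granted, the corollary is immediate.
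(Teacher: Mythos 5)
Your argument is exactly the paper's: the corollary is stated there as an immediate consequence of Lemma \ref{bounded_stable} (in contrapositive form) combined with Theorem \ref{unbound}, which is precisely what you do. Your additional remarks on how the classes $e^v_k$ assemble into $e^v_{\infty}$ and on the structure of the proof of Theorem \ref{unbound} are accurate but not needed beyond what the cited results already provide.
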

\noindent In fact by Proposition \ref{Euler_bounded_k} we know that this sequence can grow at most linearly with $k$. It would be interesting to know the precise growth rate of the sequence $||e^v_k||_{\infty}$, in particular whether it is linear or not. Furthermore, by plugging in the definitions we compute:
\begin{align*}   \sup_k||e^v_k||_{\infty} & = \sup_k\frac{|e^v_k(\alpha)|}{||\alpha||_1} = \sup_k \lim\limits_{n \to \infty}\frac{n|e^v_k(\alpha)|}{4g_{min}(n \alpha) - 4} \\
&= \sup_k \lim\limits_{n \to \infty}\frac{2^n|e^v_k(\alpha)|}{4g_{min}(2^n \alpha) - 4} = \sup_{k} \left(\sup_{n}\frac{2^n|e^v_k(\alpha)|}{4g_{min}(2^n \alpha) - 4}\right)\\
& = \sup_{k,n}\frac{2^n|e^v_k(\alpha)|}{4g_{min}(2^n \alpha) - 4} \leq \sup_{k,n}\frac{|e^v_k(n\alpha)|}{4g_{min}(n \alpha) - 4},
\end{align*}
where inequality in the second line holds, since the sequence $2^{-n}(4g_{min}(2^n \alpha) - 4)$ is monotone decreasing. This means that for each natural number $N$ there exist multisections $S_{N}$ in some $\Sigma_h$-bundle such that
\[\frac{|[S_{N}]^2|}{2g_N-2} = N.\]
By contrast this quotient is always $h-1$ in Example \ref{sections_of_bundles}, meaning that there exist fairly exotic multisections that cannot be described by simple covering tricks.

The fact that the class $e^v_{\infty}$ is not bounded fits in with the observation that stable cohomology classes tend to be unbounded. In \cite{Kot3} it was shown that the space of homogeneous quasi-homomorphisms on the stable mapping class group is trivial. Furthermore, it is easy to see that the image of the comparison from bounded cohomology to ordinary cohomology is trivial for the stable mapping class group $\Gamma_{\infty}$.
\begin{prop}\label{comp_bound}
The image of the comparison map $H_b^*(\Gamma_{\infty}) \to H^*(\Gamma_{\infty})$ is trivial. In particular, $H_b^2(\Gamma_{\infty})=0$.
\end{prop}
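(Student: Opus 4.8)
The plan is to establish the two halves of the statement separately: first that the comparison map $H^2_b(\Gamma_\infty)\to H^2(\Gamma_\infty;\mathbb R)$ is injective, and then that the comparison map $H^*_b(\Gamma_\infty)\to H^*(\Gamma_\infty;\mathbb R)$ has trivial image in every degree. Together these give $H^2_b(\Gamma_\infty)=0$.

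For the first half I would argue exactly as in Lemma \ref{bound_commutator}. By the theorem of Kotschick quoted above (\cite{Kot3}) the space $\widetilde{QH}(\Gamma_\infty)$ of homogeneous quasi-homomorphisms is trivial, and since $\Gamma_\infty$ is perfect one also has $H^1(\Gamma_\infty;\mathbb R)=0$. Substituting this into the Bavard exact sequence
\[0\to H^1(\Gamma_\infty;\mathbb R)\to\widetilde{QH}(\Gamma_\infty)\to H^2_b(\Gamma_\infty)\to H^2(\Gamma_\infty;\mathbb R)\]
shows that the last arrow is injective.

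For the second half I would show that \emph{every} homology class $\alpha\in H_n(\Gamma_\infty;\mathbb R)$ with $n\ge 1$ has vanishing $l^1$-seminorm; granting this, the duality between $||\cdot||_1$ and $||\cdot||_{\infty}$ forces any bounded class $u\in H^n(\Gamma_\infty;\mathbb R)$ to satisfy $|\langle u,\alpha\rangle|\le ||u||_{\infty}\,||\alpha||_1=0$ for all such $\alpha$, and hence $u=0$, because $H^n(\Gamma_\infty;\mathbb R)=\mathrm{Hom}(H_n(\Gamma_\infty;\mathbb R),\mathbb R)$. To prove the vanishing of the seminorm, consider first $n=2$ and a class $\alpha\in H_2(\Gamma_\infty;\mathbb Z)$. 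By Harer homological stability we may lift $\alpha$ to $H_2(\Gamma^1_g;\mathbb Z)$ for $g$ large and realise it there as $\phi_*[\Sigma]$ for a map $\phi\colon\Sigma\to B\Gamma^1_g$ from some fixed closed oriented surface $\Sigma$ (working with the one-holed mapping class group is what provides the canonical boundary circles used in the gluing below). Gluing $k$ copies of a one-holed genus $g$ surface to the inner boundary components of a $(k{+}1)$-holed sphere gives a homomorphism $\Gamma^1_g\times\dots\times\Gamma^1_g\to\Gamma^1_{kg}$, and precomposing with the diagonal gives a homomorphism $\delta_k\colon\Gamma^1_g\to\Gamma^1_{kg}$. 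Since $H_1(\Gamma^1_g;\mathbb Z)=0$ and inner automorphisms act trivially on group homology, a Künneth computation shows that $\delta_k$ induces multiplication by $k$ on the stable second homology. Hence the multiple $k\alpha\in H_2(\Gamma_\infty;\mathbb Z)$ is represented by $(\delta_k\circ\phi)_*[\Sigma]$, i.e.\ by a surface of the \emph{fixed} genus $g(\Sigma)$, so that $g_{min}(k\alpha)\le g(\Sigma)$ for every $k$ and
\[||\alpha||_{GT}=\lim_{k\to\infty}\frac{2g_{min}(k\alpha)-2}{k}\le\lim_{k\to\infty}\frac{2g(\Sigma)-2}{k}=0,\]
whence $||\alpha||_1=2||\alpha||_{GT}=0$ by Lemma \ref{Th_Gr}; the extension to $H_2(\Gamma_\infty;\mathbb R)$ is by homogeneity of the seminorm. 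In higher degrees one argues the same way, representing a suitable integer multiple of $\alpha$ over a fixed closed oriented $n$-manifold; the maps $\delta_k$ then induce the $k$-th power operation on the rational homology of the stable mapping class group, which is a connected Hopf algebra since $B\Gamma_\infty^+$ is an $H$-space, and on the weight-$w$ summand this operation is multiplication by $k^w$ with $w\ge 1$, so the same estimate again yields $||\alpha||_1=0$.

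Putting the two halves together, $H^2_b(\Gamma_\infty)$ injects into $H^2(\Gamma_\infty;\mathbb R)$ and has trivial image there, hence vanishes, and in every degree the comparison map is zero. The step I expect to require the most care is the computation of the effect of the handle-gluing homomorphisms $\delta_k$ on stable homology — multiplication by $k$ in degree $2$, and the $k$-th power operation in general — which rests on Harer homological stability together with the $H$-space (indeed infinite loop space) structure on the stable mapping class group; the remaining arguments are formal manipulations of the $l^1$-seminorm and of the Bavard sequence.
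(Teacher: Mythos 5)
Your proposal is correct in substance and runs on the same engine as the paper's proof: the Bavard exact sequence together with Kotschick's theorem that $\widetilde{QH}(\Gamma_{\infty})=0$ (\cite{Kot3}) handles $H^2_b$, and the triviality of the image of the comparison map is forced by ``power maps'' of the stable group killing the $l^1$-seminorm of homology. The difference is in execution. The paper simply pushes a class $\sigma$ forward under the diagonal $\Gamma_h\hookrightarrow(\Gamma_h)^n\hookrightarrow\Gamma_{\infty}$, uses that group homomorphisms do not increase $\|\cdot\|_1$ to get $n\|\sigma\|_1\le\|\sigma\|_1$, and concludes $\|\sigma\|_1=0$; no Harer stability, no surface or manifold representatives, and no Hopf-algebra structure enter. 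Your route through $g_{min}$ of multiples and fixed-genus representatives is equivalent in degree two (via Lemma \ref{Th_Gr}), but the higher-degree detour through Thom representability and a fixed $n$-manifold is unnecessary: every homology class has finite $l^1$-seminorm automatically, so for a weight-homogeneous class $\beta$ of weight $w\ge1$ the estimate $k^{w}\|\beta\|_1=\|\psi_k(\beta)\|_1\le\|\beta\|_1$ already gives $\|\beta\|_1=0$, and such classes span $H_*(\Gamma_{\infty};\mathbb{R})$ in positive degrees. On the other hand, your attention to the Hopf-algebra weights is not wasted: in degrees $\ge4$ the diagonal sends $\sigma$ to $n\sigma$ only modulo decomposable cross terms, so the weight decomposition is exactly the point that the paper's one-line assertion glosses over; just be sure either to argue on weight-homogeneous classes as above, or to project a mixed class onto its weight summands by a finite rational combination of the $\psi_k$ (which keeps the seminorm finite) before rescaling. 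Two small tidying remarks: the statement should be read in degrees $*\ge1$ (the comparison map is an isomorphism in degree zero), and $H^1(\Gamma_{\infty};\mathbb{R})=0$ already follows from $\widetilde{QH}(\Gamma_{\infty})=0$ in the Bavard sequence, so perfectness of $\Gamma_{\infty}$ need not be invoked separately.
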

\begin{proof}
For any $h$ and any $n$ there are inclusions $\Delta_n \hookrightarrow (\Gamma_h)^n \hookrightarrow \Gamma_{\infty}$, where $\Delta_n \cong \Gamma_h$ denotes the diagonal subgroup. Let $\alpha \in H^*( \Gamma_{\infty})$, then there is a class $\sigma \in H_*(\Gamma_h)$ on which $\alpha$ evaluates non-trivially. The image of $\sigma$ under the inclusion of the diagonal is $n\sigma$ and thus for all $n$
\[||n\sigma||_1 \leq ||\sigma||_1,\]
which implies $||\sigma||_1 = 0$. Hence, since $\alpha(\sigma) \neq 0$ and $\alpha(\sigma) \leq ||\alpha||_{\infty}||\sigma||_1$, we conclude that $\alpha$ is unbounded proving the first claim.

For the second we consider the Bavard exact sequence
\[0 \to H^1(\Gamma_{\infty}) \to \widetilde{QH}(\Gamma_{\infty}) \to H_b^2(\Gamma_{\infty}) \to H^2(\Gamma_{\infty}).\]
The group $\widetilde{QH}(\Gamma_{\infty})$ is trivial by \cite{Kot3} and as we saw above the image of the final map is trivial. We conclude that $H_b^2(\Gamma_{\infty})$ as claimed.
\end{proof}
\noindent Proposition \ref{comp_bound} suggests the conjecture that the bounded cohomology of $\Gamma_{\infty}$ is in fact trivial, although we do not have any more compelling evidence for believing this.
\begin{conj}
The bounded cohomology of $\Gamma_{\infty}$ is trivial.
\end{conj}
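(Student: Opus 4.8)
The plan is to reduce the conjecture to showing that $\Gamma_{\infty}$ is \emph{boundedly acyclic}, i.e. that $H^n_b(\Gamma_{\infty};\mathbb{R})=0$ for all $n\geq 1$ (real coefficients being the relevant case for ``triviality of bounded cohomology''). Degrees $n=0,1$ are automatic, and $n=2$ is Proposition \ref{comp_bound}. Moreover Proposition \ref{comp_bound} already shows that the comparison map $c^*\colon H^*_b(\Gamma_{\infty})\to H^*(\Gamma_{\infty})$ has trivial image in \emph{every} degree, so the conjecture is \emph{equivalent} to the assertion that $c^*$ is injective in all degrees, i.e. that the exact part $\mathrm{EH}^*_b(\Gamma_{\infty})=\ker c^*$ vanishes. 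In degree $2$ this exact part is $\widetilde{QH}(\Gamma_{\infty})/H^1(\Gamma_{\infty};\mathbb{R})$, which is zero by Kotschick's theorem together with the perfectness of $\Gamma_{\infty}$; the trouble is that in degrees $\geq 3$ there is no quasimorphism-type description of $\mathrm{EH}^*_b$ to exploit, so the elementary argument stops at $n=2$.

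The approach I would pursue is the ``infinite swindle'' strategy underlying all known bounded-acyclicity results for large transformation and mapping class groups. First I would pass to the cofinal model in which (a group with the same stable cohomology as) $\Gamma_{\infty}$ is realised as compactly supported mapping classes of the one-ended infinite-genus surface $\Sigma^{\infty,1}$ appearing in Section \ref{surface_bundles}, so that every finitely generated subgroup $H$ is supported in a compact genus-$h$ subsurface. Since $\Sigma^{\infty,1}$ contains infinitely many pairwise disjoint copies of any such subsurface marching out toward its single end, one obtains infinitely many pairwise commuting conjugate copies $H_1,H_2,\dots$ of $H$. The hope is then to feed this data into one of the standard criteria --- being \emph{mitotic} in the sense of Baumslag--Dyer--Heller/Berrick (L\"oh's theorem: mitotic $\Rightarrow$ boundedly acyclic), being \emph{binate}, or the commuting-conjugates/portability criteria of Fournier-Facio--Lodha and of Monod--Nariman --- and then transport the conclusion back to $\Gamma_{\infty}$ via Harer stability, comparing along the central $\mathbb{Z}$-extensions $\Gamma^1_h\to\Gamma_{h,1}$ and using amenability of $\mathbb{Z}$ exactly as in Lemma \ref{bound_commutator}.

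The step I expect to be the real obstacle is precisely the one where such swindles are normally closed up: the criteria above require, in addition to the commuting conjugate copies, a single ``shift'' element $s$ with $sH_i s^{-1}=H_{i+1}$, and no such $s$ can be compactly supported --- it moves mass out to the end of $\Sigma^{\infty,1}$ --- so it does not lie in $\Gamma_{\infty}$ at all. I would try to circumvent this in one of two ways: (a) enlarge $\Gamma_{\infty}$ to an overgroup $\widehat{\Gamma}$ (adjoin the shift, or allow non-compactly-supported behaviour near the end) which \emph{is} binate, hence boundedly acyclic, and then show $\Gamma_{\infty}\hookrightarrow\widehat{\Gamma}$ is an isomorphism on bounded cohomology because $\widehat{\Gamma}/\langle\!\langle\Gamma_{\infty}\rangle\!\rangle$ is amenable (a coamenability/Gromov mapping-theorem argument); or (b) verify directly a ``displacement'' structure internal to $\Gamma_{\infty}$, building, for each finite configuration of supports, honest compactly supported mapping classes carrying out enough of the swindle's combinatorics. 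As a consistency check one notes that $\Gamma_{\infty}$ is perfect and that Madsen--Weiss identifies $B\Gamma_{\infty}^{+}$ with the simply connected space $\Omega^{\infty}_0 MT\mathrm{SO}(2)$; simply connected spaces have vanishing bounded cohomology, so the conjecture is what one would predict if bounded cohomology were a plus-construction invariant --- it is not, which is exactly why the swindle has to be exhibited by hand.
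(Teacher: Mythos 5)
The statement you are addressing is stated in the paper as a \emph{conjecture}: the paper offers no proof of it, only the suggestive evidence of Proposition \ref{comp_bound} (trivial image of the comparison map, and $H_b^2(\Gamma_\infty)=0$ via Kotschick's vanishing of quasi-homomorphisms), and the author explicitly says there is no further compelling evidence. Your proposal, by your own account, is likewise not a proof but a strategy: the reduction to bounded acyclicity, the equivalence with injectivity of the comparison map, and the reformulation in terms of the compactly supported mapping class group of an infinite-genus surface with commuting conjugate copies of any finitely generated subgroup are all sound and consistent with the paper's framework, but the decisive step is exactly the one you flag and do not carry out. The mitotic/binate/commuting-conjugates criteria all need a shift-type element realising the conjugations uniformly, and such an element is not compactly supported, hence lies outside $\Gamma_\infty$; having only commuting conjugate copies of each finitely generated subgroup is precisely the input Kotschick uses to kill quasi-homomorphisms (degree $2$ exactness), and it is not known to suffice in higher degrees. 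So the argument genuinely stops where you say it stops.

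Moreover, your proposed repair (a) has a directional problem as stated. If $\Gamma_\infty$ is coamenable in an overgroup $\widehat\Gamma$ (or if $\widehat\Gamma$ surjects onto something with amenable kernel), the standard consequences run the wrong way for your purpose: coamenability of the subgroup gives injectivity of the restriction $H_b^*(\widehat\Gamma)\to H_b^*(\Gamma_\infty)$, so bounded acyclicity of $\widehat\Gamma$ yields no information about $H_b^*(\Gamma_\infty)$; Gromov's mapping theorem applies to quotients by amenable normal subgroups, and $\Gamma_\infty$ is neither normal in the shift-enlarged group nor a quotient of it by an amenable kernel. To make (a) work you would need surjectivity (or an isomorphism) of restriction, which is not supplied by the amenability of $\widehat\Gamma/\langle\!\langle\Gamma_\infty\rangle\!\rangle$. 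Option (b) — exhibiting a displacement structure by honest compactly supported elements — is the real content that would have to be created, and nothing in the paper (nor in your sketch) provides it. In short: correct framing, correct identification of the obstacle, but the conjecture remains exactly as open after your proposal as it is in the paper.
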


\end{document}